\def\tank#1{\protected@xdef\@thanks{\@thanks
        \protect\footnotetext[0]{#1}}}
\def\bigfoot{

    \@footnotetext}
\newcommand{\ea}{\end{array}}
\newtheorem{theorem}{Theorem}[section]
\newtheorem{hypothesis}{Hypothesis}[section]
\newtheorem{lemma}{Lemma}[section]
\newtheorem{definition}{Definition}[section]
\newtheorem{Rem}{Remark}[section]
\newtheorem{Exm}{Example}[section]}
\newenvironment{proof}{Proof.}
\begin{document}
\title{{\Large \bf Distribution Dependent Stochastic Porous Media Equations 
}\footnote{ W. Liu is supported by NSFC (No.~12171208, 11831014, 12090011); W. Hong is supported by NSFC (No.~12171354); J. Gao is supported by
 Postgraduate Research \& Practice Innovation Program of Jiangsu Province (No. KYCX21$\_$2563). }}
\author{{Jingyue Gao$^a$},   {Wei Hong$^b$},  {Wei Liu$^{a}$}\footnote{Corresponding author: weiliu@jsnu.edu.cn}
\\
  \small $a.$ School of Mathematics and Statistics, Jiangsu Normal University, Xuzhou 221116, China \\
  \small $b.$ Center for Applied Mathematics, Tianjin University, Tianjin 300072, China}
\date{}
\maketitle

\begin{center}
\begin{minipage}{155mm}
{\bf Abstract}  Using the generalized variational framework, the strong/weak existence and uniqueness of solutions are derived for a class of distribution dependent stochastic porous media equations on general measure spaces, which also extends the classical well-posedness result of  quasilinear SPDE to the distribution dependent case.

\vspace{1mm} {\bf Keywords:} SPDE; Porous media equation; Distribution dependent; Variational approach

\vspace{1mm}
\noindent{\bf AMS Subject Classification:} {60H15, 76S05, 35K67}
\end{minipage}
\end{center}

\section{Introduction}
The porous media equation arises originally as a model for gas flow in a porous medium, which is given by the following form
\begin{equation}\label{e0}
\partial_t X(t)=\Delta\Psi(X(t)),
\end{equation}
where $\Psi$ satisfies certain assumptions and a typical example is $\Psi(x)=x^m:=|x|^{m-1}x$ with some constant $m>1$. The solution of Eq.~(\ref{e0}) stands for the density of gas. We refer to \cite{A,P,V07} and references therein for more background of this model.

This work is mainly concerned with the distribution dependent stochastic porous media type equations, in comparison to the deterministic and stochastic models, we consider random force instead of deterministic ones and the coefficients of such equations not only depend on the spatial and time variables, but also on the distribution of solution, which could model the random effects of some macro environment. The stochastic porous media equations (SPMEs) have attracted considerable attentions in the last decades. For instance, the existence and uniqueness of strong solutions for SPMEs were first investigated in \cite{DRRW} in the additive noise case, where $\Psi$ is a continuous function satisfying some monotonicity and growth conditions. Subsequently, this work was extended to  more general case in \cite{RRW} that the monotone nonlinearities $\Psi$ is a $\Delta_2$-regular Young function such that $r\Psi(r)\to \infty$ as $r\to\infty$, which covers also the fast diffusion equations.

Recently, the authors in \cite{BRR} investigated SPMEs on $\mathbb{R}^d$ under two types of conditions with different methods, namely, the Lipschitz condition via variational method and the polynomial growth condition via Yosida approximation method, and later on R\"{o}ckner et al.~\cite{RWX} extended this work to the case of  general measure spaces.~We  refer to \cite{BDR3,BDR1,BDR2,LRS18,LRS21,RRW,RW1} and references therein for further existence and uniqueness results. Moreover, there are also fruitful results in the literature concerning various properties of solutions to SPMEs such as \cite{BGLR,G,GLS} for global random attractors, \cite{L1,RWW,WZ,XZ} for large deviation principle, and \cite{L4,W2} for the Harnack type inequalities and ergodicity.

 The main purpose of this paper is to show the strong/weak existence and uniqueness of solutions to distribution dependent stochastic porous media equations (DDSPMEs), which have the following form
\begin{equation}\label{e1}
dX(t)=L\Psi(t,X(t),\mathcal{L}_{X(t)})dt+B(t,X(t),\mathcal{L}_{X(t)})dW(t),~t\in[0,T],
\end{equation}
where $L$ is a negative definite self-adjoint linear operator, $\mathcal{L}_{X(t)}$ stands for the distribution of $X(t)$, $\{W(t)\}_{t\in [0,T]}$ is a cylindrical Wiener processes defined on a complete filtered probability space $\left(\Omega,\mathcal{F},\mathcal{F}_{t\geq0},\mathbb{P}\right)$ taking values in a separable Hilbert space $(U,\langle\cdot,\cdot\rangle_U)$, the coefficients $\Psi$ and $B$ satisfy some conditions which will be specified later.

Recently, the distribution dependent stochastic (partial) differential equations (DDSDEs/DDSPDEs), also called McKean-Vlasov S(P)DEs or mean-field S(P)DEs, have attracted more and more attentions, we refer to the recent survey article \cite{HRW} for more information of this topic.
One motivation for DDS(P)DEs is due to its wide applications since the evolution of many stochastic systems might rely on both the microcosmic position and the macrocosmic distribution of the particle, another one is due to  their intrinsic link with nonlinear Fokker-Planck-Kolmogorov equation for probability measures (cf. \cite{BR1,BR3,HRW}). More specifically, the distribution density (denoted by $\rho_t$) of solution to a distribution dependent SDE  solves the following nonlinear PDE
\begin{equation*}
\partial_t\rho_t=L^*\rho_t,~~t\geq 0,
\end{equation*}
where $L$ is a second order nonlinear operator and $L^*$ denotes its adjoint operator.
Such type of model can also be described as the weak limit of $N$-interacting particle systems formed by $N$ equations perturbed by independent Wiener processes, while $N$ goes to infinity (the \emph{propagation of chaos}), the reader might see \cite{M1} for more details on this subject.  For more recent results on DDS(P)DEs, one can see \cite{BR2,BLPR,HS,HW,L2,RW3,RW2,RZ21,W1} and references therein. To the best of our knowledge, the references we mentioned above mainly focus on DDSDEs or semilinear DDSPDEs, there are very few results (see \cite{H}) in the literature concerning quasilinear DDSPDEs due to the technical difficulties caused by the nonlinear terms.

Therefore, we aim to investigate a class of distribution dependent quasilinear SPDEs in this work. Fist we want to mention some background and motivation for investigating such type DDSPDEs. Recently, Shen et al. \cite{SSZZ} considered the large $N$ limits of a coupled system of $N$ interacting $\Phi^4$ models on the $2$-dimensional torus $\mathbb{T}^2$, they studied the $N\to\infty$ limit of each component  and proved that a suitable distribution dependent SPDE governs the limiting dynamics. This large $N$ limit problem can  be viewed as a typical mean field limit result in the context of SPDE systems. In addition, the study of mean field limit for SPDE systems also has precursors, see for example \cite[Chapter 9]{KX}. Therefore, the consideration of a class of distribution dependent quasilinear SPDEs is also quite natural.

 It seems to us  that Kaneko \cite{K2} first developed certain $L_p$-Bessel potential spaces corresponding to a sub-Markovian semigroup in order to solve some problems occurred in Dirichlet spaces. A systematic theory on the Bessel potential spaces is developed by Kazumi and Shigekawa \cite{KS}, one can see also \cite{FJS} and references therein for more results. In the current paper, we will investigate a class of DDSPMEs on the dual space of a Dirichlet space.  We first extend the variational framework, which has been established by Pardoux, Krylov and Rozovskii (see e.g. \cite{KR,LR1}) for classical SPDEs, to the distribution dependent case.
It should be pointed out that the author in \cite{H} studied the strong solutions to DDSDEs in finite as well as infinite dimensional cases with delay. More precisely, he applied a fixed-point argument to prove the existence and uniqueness of solutions in  finite dimensional case and then utilized the Galerkin approximating argument  to prove the existence and uniqueness of solutions in infinite dimensional case under the variational framework with coercivity parameter $p\geq2$ (see (H2) in Section 3.1 of \cite{H}).
In this paper, instead of the Galerkin approximating argument employed in \cite{H}, we directly use the fixed-point approach to prove the existence and uniqueness of strong solutions under the variational framework, in particular, we could extend the coercivity parameter to more general case $p>1$, which now coincides with the classical SPDE case (cf.~\cite{LR1}).

Relying on this variational setting, the strong/weak existence and uniqueness of solutions are derived for a class of DDSPMEs on general measure space as the second main result of this paper. We mainly follow the strategy of the work \cite{RWX}  but some extra efforts to estimate the distribution dependent terms
have to be made.
In addition, we now work in the dual space of a Dirichlet space as in \cite{RWX}, which is more general than the classical Sobolev space of order one in $L^2$, so that we could deal with a general negative definite self-adjoint operator $L$. In particular, as mentioned in \cite{RWX}, it is applicable to the fractional Laplace operator,~i.e.~$L=-(-\Delta)^{\alpha}$, $\alpha\in(0,1]$ and the generalized Schr\"{o}dinger operators $L=\Delta+2\frac{\nabla\rho}{\rho}\cdot\nabla$.

The remainder of this manuscript is organized as follows. In section 2, we construct the variational framework for a class of distribution dependent monotone SPDEs and derive the existence and uniqueness of solutions for such models. In section 3, we devote to proving our strong and weak well-posedness result of DDSPMEs on general measure spaces.

\section{Distribution Dependent Monotone SPDEs}
\setcounter{equation}{0}
 \setcounter{definition}{0}
In this section we aim to extend the classical variational framework to  the distribution dependent case, which will be used later to establish our main result in Section 3.

Let $(U,\langle\cdot,\cdot\rangle_U)$ and $(H, \langle\cdot,\cdot\rangle_H) $ be the separable Hilbert spaces, and $H^*$ the dual space of $H$. Let $V$ denote a reflexive Banach space such that the embedding $V\subset H$ is continuous and dense. Identifying $H$ with its dual space by the Riesz isomorphism, we have the following Gelfand triple
$$V\subset H(\cong H^*)\subset V^*.$$
The dualization between $V$ and $V^*$ is denoted by $_{V^*}\langle\cdot,\cdot\rangle_V$. Moreover, it is easy to see that $_{V^*}\langle\cdot,\cdot\rangle_V|_{H\times V}=\langle\cdot,\cdot\rangle_H$. Let $L_2(U,H)$ be the space of all Hilbert-Schmidt operators from $U$ to $H$.

Let $\mathcal{P}(H)$ be the space of all probability measures on $H$ equipped with weak topology and
$$\mathcal{P}_2(H):=\Big\{\mu\in\mathcal{P}(H):\mu(\|\cdot\|_{H}^2):=\int_H\|\xi\|_H^2\mu(d\xi)<\infty\Big\}.$$
Then $\mathcal{P}_2(H)$ is a Polish space under the so-called $L^2$-Wasserstein distance
$$\mathbb{W}_{2,H}(\mu,\nu):=\inf_{\pi\in\mathcal{C}(\mu,\nu)}\Big(\int_{H\times H}\|\xi-\eta\|_H^2\pi(d\xi,d\eta)\Big)^{\frac{1}{2}},~\mu,\nu\in\mathcal{P}_2(H),$$
here $\mathcal{C}(\mu,\nu)$ denote the set of all couplings for $\mu$ and $\nu$, i.e., $\pi\in\mathcal{C}(\mu,\nu)$ is a probability measure on $H\times H$ such that $\pi(\cdot\times H)=\mu$ and $\pi(H\times \cdot)=\nu$. For any $0\leq s<t<\infty$, let $C([s,t];\mathcal{P}_2(H))$ denote the set of all continuous maps from $[s,t]$ to $\mathcal{P}_2(H)$ under $\mathbb{W}_{2,H}$.

Let $T>0$ be fixed. For the progressively measurable maps
$$
A:[0,T]\times\Omega\times V\times\mathcal{P}(H)\rightarrow V^*,~~B:[0,T]\times\Omega\times V\times\mathcal{P}(H)\rightarrow L_2(U,H),
$$
we consider the following distribution dependent SPDE on $H$
\begin{equation}\label{e4}
dX(t)=A(t,X(t),\mathcal{L}_{X(t)})dt+B(t,X(t),\mathcal{L}_{X(t)})dW(t),~X(0)=X_0,
\end{equation}
where $\{W(t)\}_{t\in [0,T]}$ is an $U$-valued cylindrical Wiener process defined on a complete filtered probability space $\left(\Omega,\mathcal{F},\mathcal{F}_{t\geq0},\mathbb{P}\right)$ admits the following representation
$$W(t)=\sum_{k=1}^{\infty}\beta_k(t)e_k,$$
where $\beta_k(t),k\geq1$ are independent standard Brownian motions. Below we write $A(t,u,\mu)$ to denote the map $\omega\longmapsto A(t,\omega,u,\mu)$, similarly for $B(t,u,\mu)$. We suppose that $A$ and $B$ satisfy the following assumptions.
\begin{hypothesis}\label{h1}
There are some constants $\alpha>1$, $c\in\mathbb{R},\delta>0$ and an $(\mathcal{F}_t)$-adapted process $f_{\cdot}\in L^1([0,T]\times\Omega,dt\times\mathbb{P})$ such that the following conditions hold.
\begin{enumerate}
\item [$({\mathbf{H}}{\mathbf{1}})$] $($Demicontinuity$)$ For all $(t,\omega)\in[0,T]\times\Omega$, the map
\begin{eqnarray*}
V\times\mathcal{P}_2(H)\ni(u,\mu)\mapsto_{V^*}\langle A(t,u,\mu),v\rangle_V
\end{eqnarray*}
is continuous.
\item [$({\mathbf{H}}{\mathbf{2}})$] $($Coercivity$)$ For all $u\in V$, $\mu\in\mathcal{P}_2(H)$ and $t\in[0,T]$,
\begin{eqnarray*}
2_{V^*}\langle A(t,u,\mu),u\rangle_V+\|B(t,u,\mu)\|_{L_2(U,H)}^2\leq c\|u\|_H^2+c\mu(\|\cdot\|_H^2)-\delta\|u\|_V^\alpha+f_t~\text{on}~\Omega.
\end{eqnarray*}
\item [$({\mathbf{H}}{\mathbf{3}})$] $($Monotonicity$)$ For all $u,v\in V$ and $\mu,\nu\in\mathcal{P}_2(H)$,
\begin{eqnarray*}
\!\!\!\!\!\!\!\!&&2_{V^*}\langle A(\cdot,u,\mu)-A(\cdot,v,\nu),u-v\rangle_V+\|B(\cdot,u,\mu)-B(\cdot,v,\nu)\|_{L_2(U,H)}^2
\nonumber\\
\!\!\!\!\!\!\!\!&&~~~\leq c\|u-v\|_H^2+c\mathbb{W}_{2,H}(\mu,\nu)^2~\text{on}~[0,T]\times\Omega.
\end{eqnarray*}
\item [$({\mathbf{H}}{\mathbf{4}})$] $($Growth$)$ For all $u\in V$, $\mu\in\mathcal{P}_2(H)$ and $t\in[0,T]$,
\begin{eqnarray*}
\|A(t,u,\mu)\|_{V^*}^{\frac{\alpha}{\alpha-1}}\leq c\|u\|_V^{\alpha}+c\mu(\|\cdot\|_H^2)+f_t~\text{on}~\Omega ,
\end{eqnarray*}
\begin{eqnarray}\label{b1}
\|B(t,u,\mu)\|_{L_2(U,H)}^2\leq c\|u\|_H^2+c\mu(\|\cdot\|_H^2)+f_t~\text{on}~\Omega.
\end{eqnarray}
\end{enumerate}
\end{hypothesis}

\begin{Rem}
Note that if we choose the Gelfand triple $V=H=V^*=\mathbb{R}^d$, then $A(t,\cdot,\cdot)$ is continuous on $\mathbb{R}^d\times\mathcal{P}_2(\mathbb{R}^d)$.
\end{Rem}

\begin{definition}\label{d1}
We call a continuous $H$-valued $(\mathcal{F}_t)_{t\geq 0}$-adapted process $\{X(t)\}_{t\in[0,T]}$ is a solution of Eq.~(\ref{e4}), if for its $dt\times \mathbb{P}$-equivalent class $\hat{X}$
\begin{eqnarray*}
\hat{X}\in L^\alpha\big([0,T]\times\Omega,dt\times\mathbb{P};V\big)\cap L^2\big([0,T]\times\Omega,dt\times\mathbb{P};H\big),
\end{eqnarray*}
where $\alpha$ is the same as defined in $(\mathbf{H2})$ and $\mathbb{P}$-a.s.
\begin{eqnarray*}
X(t)=X(0)+\int_0^t A(s,\bar{X}(s),\mathcal{L}_{\bar{X}(s)})ds+\int_0^t B(s,\bar{X}(s),\mathcal{L}_{\bar{X}(s)})dW(s),~t\in[0,T],
\end{eqnarray*}
where $\bar{X}$ is an $V$-valued progressively measurable $dt\times\mathbb{P}$-version of $\hat{X}$.
\end{definition}

We now give the main result of this section.
\begin{theorem}\label{t3}
 Suppose that $({\mathbf{H}}{\mathbf{1}})$-$({\mathbf{H}}{\mathbf{4}})$ hold.  Then for $X_0\in L^2(\Omega,\mathbb{P};H)$, Eq.~(\ref{e4}) has a unique solution in the sense of Definition \ref{d1} and  there exists  $C_T>0$ such that
\begin{equation}\label{0}
E\Big[\sup_{t\in[0,T]}\|X(t)\|_H^2\Big]\leq C_T\big(1+\mathbb{E}\|X_0\|_H^2\big).
\end{equation}
\end{theorem}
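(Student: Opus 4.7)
The plan is to decouple the distribution dependence by a Banach fixed-point scheme on $C([0,T];\mathcal{P}_2(H))$, thereby reducing the DDSPDE to the classical variational SPDE framework. For any prescribed flow $\mu_\cdot\in C([0,T];\mathcal{P}_2(H))$, I would freeze the measure slot and consider the auxiliary SPDE
\begin{equation*}
dY(t)=A(t,Y(t),\mu_t)\,dt+B(t,Y(t),\mu_t)\,dW(t),\quad Y(0)=X_0.
\end{equation*}
The frozen coefficients $(t,\omega,u)\mapsto A(t,\omega,u,\mu_t)$ and $(t,\omega,u)\mapsto B(t,\omega,u,\mu_t)$ inherit the classical hemicontinuity, coercivity, monotonicity and growth conditions from (H1)--(H4): the $c\mu_t(\|\cdot\|_H^2)$ terms in (H2) and (H4) are absorbed into a new integrable adapted function $\tilde f_t:=f_t+c\mu_t(\|\cdot\|_H^2)$, the needed continuity and integrability of $t\mapsto\mu_t(\|\cdot\|_H^2)$ coming for free from $\mu_\cdot\in C([0,T];\mathcal{P}_2(H))$. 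The Pardoux--Krylov--Rozovskii variational theory (cf.\ \cite{LR1}) then provides a unique continuous $H$-valued adapted solution $Y^\mu$ in $L^\alpha([0,T]\times\Omega;V)\cap L^2([0,T]\times\Omega;H)$ together with
\begin{equation*}
\mathbb{E}\sup_{t\in[0,T]}\|Y^\mu(t)\|_H^2\leq C_T\Bigl(1+\mathbb{E}\|X_0\|_H^2+\int_0^T\mu_s(\|\cdot\|_H^2)\,ds\Bigr).
\end{equation*}

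This lets us define $\Phi:C([0,T];\mathcal{P}_2(H))\to C([0,T];\mathcal{P}_2(H))$ by $\Phi(\mu)_t:=\mathcal{L}_{Y^\mu(t)}$. The crux is a Gronwall contraction estimate: for two flows $\mu,\nu$, applying It\^o's formula (in the Gelfand triple) to $\|Y^\mu(t)-Y^\nu(t)\|_H^2$, invoking (H3), and taking expectation to kill the martingale term yield
\begin{equation*}
\mathbb{E}\|Y^\mu(t)-Y^\nu(t)\|_H^2\leq c\int_0^t\mathbb{E}\|Y^\mu(s)-Y^\nu(s)\|_H^2\,ds+c\int_0^t\mathbb{W}_{2,H}(\mu_s,\nu_s)^2\,ds.
\end{equation*}
Gronwall gives $\mathbb{E}\|Y^\mu(t)-Y^\nu(t)\|_H^2\leq C\int_0^t\mathbb{W}_{2,H}(\mu_s,\nu_s)^2\,ds$, and since $\mathbb{W}_{2,H}(\mathcal{L}_{Y^\mu(t)},\mathcal{L}_{Y^\nu(t)})^2\leq\mathbb{E}\|Y^\mu(t)-Y^\nu(t)\|_H^2$ via any coupling, iterating $n$ times produces
\begin{equation*}
\sup_{t\leq T}\mathbb{W}_{2,H}(\Phi^n(\mu)_t,\Phi^n(\nu)_t)^2\leq\frac{(CT)^n}{n!}\sup_{t\leq T}\mathbb{W}_{2,H}(\mu_t,\nu_t)^2,
\end{equation*}
so $\Phi^n$ is a strict contraction for $n$ large. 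Its unique fixed point $\mu^*$ delivers the desired solution $X:=Y^{\mu^*}$, and uniqueness for (2.1) follows because the marginal-law flow of any solution must also be a fixed point of $\Phi$.

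The main obstacle I expect is the contraction step: one has to carefully justify It\^o's formula for $\|Y^\mu-Y^\nu\|_H^2$ applied to two variational solutions that lie in $V$ only in the $dt\otimes\mathbb{P}$-sense, along the lines of the classical Krylov--Rozovskii argument, and then verify that hypothesis (H3) alone --- with no ``$\delta\|u-v\|_V^\alpha$'' coercivity term for the difference --- is strong enough to close the loop once the Wasserstein source term is bounded pathwise via $\mathbb{W}_{2,H}^2(\mathcal{L}_{Y^\mu(t)},\mathcal{L}_{Y^\nu(t)})\leq\mathbb{E}\|Y^\mu(t)-Y^\nu(t)\|_H^2$. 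This is precisely where the coercivity exponent $\alpha>1$ (rather than $\alpha\geq2$ as in \cite{H}) can be accommodated, because only the $H$-norm enters the monotonicity bound and no Galerkin approximation is required. Finally, the moment estimate (\ref{0}) is obtained for $X=Y^{\mu^*}$ by It\^o's formula applied to $\|X(t)\|_H^2$: (H2) yields $2\,{}_{V^*}\langle A(X),X\rangle_V+\|B(X)\|_{L_2(U,H)}^2\leq c\|X\|_H^2+c\mathcal{L}_{X(t)}(\|\cdot\|_H^2)-\delta\|X\|_V^\alpha+f_t$, the Burkholder--Davis--Gundy inequality controls the stochastic integral, and a Gronwall argument applied to the coupling between $\mathbb{E}\sup_{s\leq t}\|X(s)\|_H^2$ and $\int_0^t\mathcal{L}_{X(s)}(\|\cdot\|_H^2)\,ds=\int_0^t\mathbb{E}\|X(s)\|_H^2\,ds$ closes the estimate.
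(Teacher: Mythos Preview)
Your proposal is correct and follows essentially the same strategy as the paper: freeze the measure flow, invoke the classical variational well-posedness theory for the frozen equation, and close via a fixed-point argument on $C([0,T];\mathcal{P}_2(H))$, with the moment bound obtained afterward by It\^o's formula, (H2), BDG and Gronwall. The only cosmetic difference is in how the fixed point is produced: the paper equips $C([s,T];\mathcal{P}_2(H))$ with the exponentially weighted metric $d_t(\mu,\nu)=\sup_{r\in[s,t]}e^{-\lambda r}\mathbb{W}_{2,H}(\mu(r),\nu(r))$, shows $\Phi$ is a strict contraction on a short interval of length $t_0<1/c$, and then concatenates, whereas you use the equivalent Picard-iteration device of showing $\Phi^n$ is a contraction on the whole of $[0,T]$ via the $(CT)^n/n!$ bound---both are standard and interchangeable here.
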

\begin{Rem}
This result is applicable to various stochastic models such as distribution dependent stochastic p-Laplace equations,  stochastic reaction-diffusion equations and also a class of DDSPMEs.  But it should be noted that the main results obtained in Section 3 can not be covered by this setting, and the DDSPME models in Section 3 is more general. Here  we need to use this result to prove the existence and uniqueness of solutions to certain approximating equations of DDSPMEs (see Theorem \ref{t1}).
\end{Rem}

\begin{proof}
We would like to separate the proof into three steps.

\textbf{Step 1:} Instead of the classical finite-dimensional projection arguments of Galerkin type, here we give a different and more succinct proof for the well-posedness of Eq.~(\ref{e4}).~For any $0\leq s<t\leq T$, $\mu({\cdot})\in C([s,T];\mathcal{P}_2(H))$ and $\psi\in\mathcal{P}_2(H)$, we consider the following reference SPDE for initial value $X_{s,s}^{\psi}$ with $\mathcal{L}_{X_{s,s}^{\psi}}=\psi$,
\begin{equation}\label{e5}
dX_{s,t}^{\psi,\mu}=A^\mu(t,X_{s,t}^{\psi,\mu})dt+B^\mu(t,X_{s,t}^{\psi,\mu})dW(t),~t\in[s,T],
\end{equation}
where $A^\mu(t,x):=A(t,x,\mu(t))$ and $B^\mu(t,x):=B(t,x,\mu(t))$. Note that (\ref{e5}) is a classical SPDE (not distribution dependent),  the conditions ({\textbf{H}}{\textbf{1}})-({\textbf{H}}{\textbf{4}}) imply the conditions in \cite[Theorem 5.1.3]{LR1} which turns out that  the existence and uniqueness of solutions to the reference SPDE (\ref{e5}) hold, and the solution $\{X_{s,t}^{\psi,\mu}\}_{t\in[s,T]}$ in the sense of the Definition \ref{d1} is a continuous $H$-valued $(\mathcal{F}_t)_{t\geq 0}$-adapted process fulfilling  $\mathbb{E}\left[\sup_{t\in[s,T]}\|X_{s,t}^{\psi,\mu}\|_H^2\right]<\infty$. Furthermore, it is easy to prove that $\mathcal{L}_{X_{s,\cdot}^{\psi,\mu}}\in C([s,T];\mathcal{P}_2(H))$. Now we define the  map $\Phi_{s,\cdot}^{\psi}:C([s,T];\mathcal{P}_2(H))\to C([s,T];\mathcal{P}_2(H))$,
\begin{equation}\label{1}
\Phi_{s,t}^{\psi}(\mu):=\mathcal{L}_{X_{s,t}^{\psi,\mu}},~t\in[s,T],~\mu\in C([s,T];\mathcal{P}_2(H)),
\end{equation}
for $\{X_{s,t}^{\psi,\mu}\}_{t\in[s,T]}$ solving Eq.~(\ref{e5}). We mention that $(X^{\psi},\mu)$ is a solution of the DDSPDE (\ref{e4}) with the initial distribution $\psi$ if and only if $X_{s,t}^\psi=X_{s,t}^{\psi,\mu}$ and $\mu(t)=\Phi^{\psi}_{s,t}(\mu)$, $t\in[s,T]$. More precisely, the fixed points of map $\Phi^{\psi}_{s,\cdot}$ are exactly solutions of Eq.~(\ref{e4}). To this end, we will verify the contraction of $\Phi^{\psi}_{s,\cdot}$ with respect to the following complete metric
$$d_t(\mu,\nu):=\sup_{r\in[s,t]}e^{-\lambda r}\mathbb{W}_{2,H}(\mu(r),\nu(r)),$$
in the subspace $M_{t}:=\{\mu\in C([s,t];\mathcal{P}_2(H)):\mu(s)=\psi\}$. Here $\mu,\nu\in C([s,t];\mathcal{P}_2(H))$ for $0\leq s<t\leq T$ and $\lambda$ is a positive constant which will be chosen later.

Let $\mu,\nu\in C([s,t];\mathcal{P}_2(H))$ and $X_{s,s}^{\psi}$ be an $\mathcal{F}_s$-measurable r.v.~with $\mathcal{L}_{X_{s,s}^{\psi}}=\psi$, we consider the following SPDEs
$$dX_{s,t}^{\psi,\mu}=A^\mu(t,X_{s,t}^{\psi,\mu})dt+B^\mu(t,X_{s,t}^{\psi,\mu})dW(t),~X_{s,s}^{\psi,\mu}=X_{s,s}^{\psi},~t\in[s,T],$$
$$dX_{s,t}^{\psi,\nu}=A^\nu(t,X_{s,t}^{\psi,\nu})dt+B^\nu(t,X_{s,t}^{\psi,\nu})dW(t),~X_{s,s}^{\psi,\nu}=X_{s,s}^{\psi},~t\in[s,T].$$
Using It\^{o}'s formula for $\|\cdot\|_H^2$ (cf.~\cite[Theorem 4.2.5]{LR1}),
\begin{eqnarray*}
\!\!\!\!\!\!\!\!&&\|X_{s,t}^{\psi,\mu}-X_{s,t}^{\psi,\nu}\|_H^2
\nonumber\\
=~~\!\!\!\!\!\!\!\!&&\int_s^t\Big[2{}_{V^*}\langle A(s,X_{s,r}^{\psi,\mu},\mu(r))-A(s,X_{s,r}^{\psi,\nu},\nu(r)),X_{s,r}^{\psi,\mu}-X_{s,r}^{\psi,\nu}\rangle_{V}
\nonumber\\
\!\!\!\!\!\!\!\!&&+\|B(s,X_{s,r}^{\psi,\mu},\mu(r))-B(s,X_{s,r}^{\psi,\nu},\nu(r))\|_{L_2(U,H)}^2\Big]dr
\nonumber\\
\!\!\!\!\!\!\!\!&&+2\int_s^t\big\langle\big(B(s,X_{s,r}^{\psi,\mu},\mu(r))-B(s,X_{s,r}^{\psi,\nu},\nu(r))\big)dW(r),X_{s,r}^{\psi,\mu}-X_{s,r}^{\psi,\nu}\big\rangle_H.
\end{eqnarray*}
Following from  ({\textbf{H}}{\textbf{3}}) and the product rule, we know that for any $\kappa>0$,
\begin{eqnarray}\label{es0}
\!\!\!\!\!\!\!\!&&e^{-\kappa t}\mathbb{E}\|X_{s,t}^{\psi,\mu}-X_{s,t}^{\psi,\nu}\|_H^2
\nonumber\\
=~~\!\!\!\!\!\!\!\!&&\int_s^te^{-\kappa r}d\big(\mathbb{E}\|X_{s,r}^{\psi,\mu}-X_{s,r}^{\psi,\nu}\|_H^2\big)+\int_s^t\mathbb{E}\|X_{s,r}^{\psi,\mu}-X_{s,r}^{\psi,\nu}\|_H^2de^{-\kappa r}
\nonumber\\
\leq~~\!\!\!\!\!\!\!\!&&-\kappa\int_s^te^{-\kappa r}\mathbb{E}\|X_{s,r}^{\psi,\mu}-X_{s,r}^{\psi,\nu}\|_H^2dr
\nonumber\\
\!\!\!\!\!\!\!\!&&
+c\int_s^te^{-\kappa r}\mathbb{E}\Big[\|X_{s,r}^{\psi,\mu}-X_{s,r}^{\psi,\nu}\|_H^2+\mathbb{W}_{2,H}(\mu(r),\nu(r))^2\Big]dr.
\end{eqnarray}
Taking $\kappa=c$, (\ref{es0}) yields that
\begin{eqnarray}\label{37}
e^{-c t}\mathbb{E}\|X_{s,t}^{\psi,\mu}-X_{s,t}^{\psi,\nu}\|_H^2
\leq c\int_s^te^{-c r}\mathbb{W}_{2,H}(\mu(r),\nu(r))^2dr.
\end{eqnarray}
Consequently, recalling the definition of $d_t$ and noting that the joint distribution of $(X^{\psi,\mu}_{s,\cdot},X^{\psi,\nu}_{s,\cdot})$ is a coupling of $(\Phi^{\psi}_{s,\cdot}(\mu),\Phi^{\psi}_{s,\cdot}(\nu))$ (cf.~\cite[Theorem 3.3]{Lacker}), it is easy to deduce that
\begin{eqnarray}\label{38}
d_t(\Phi^{\psi}_{s,\cdot}(\mu),\Phi^{\psi}_{s,\cdot}(\nu))
\leq c^{\frac{1}{2}}(t-s)^{\frac{1}{2}}d_t(\mu,\nu).
\end{eqnarray}
 Taking $t_0\in(0,\frac{1}{c})$ such that $ct_0<1$, then map $\Phi^{\psi}_{s,\cdot}$ is strictly contraction on $M_{(s+t_0)\wedge T}$ under the metric $d_t$ for each $s\in[0,T)$, hence it has a unique fixed point.

\textbf{Step 2:} Let $s=0$ and $\psi=\mathcal{L}_{X_0}$. According to the Banach fixed point theorem, there is a unique $\mu(t)=\Phi^{\psi}_{0,t}(\mu)$ for any $t\in[0,t_0\wedge T]$ which  together with the definition of map $\Phi^{\psi}_{s,t}$ implies that $X^{\psi,\mu}_{0,t}$ is a solution to Eq.~(\ref{e4}) up to time $t_0\wedge T$. On the other hand, if we take $\mu(t):=\mathcal{L}_{X(t)}$ for each solution of Eq.~(\ref{e4}), then it is easy to infer that $\mu(t)$ is a solution to the equation
\begin{equation}\label{e6}
\mu(t)=\Phi^{\psi}_{0,t}(\mu),~t\in[0,t_0\wedge T].
\end{equation}
Therefore, the uniqueness of Eq.~(\ref{e6}) gives the uniqueness of Eq.~(\ref{e4}).

We remark that if $t_0\geq T$ then the proof of well-posedness to Eq.~(\ref{e4}) is finished. If $t_0<T$, since $t_0$ is independent of $X_0$, we take $s=t_0$ and $\psi=\mathcal{L}_{X(t_0)}$, (\ref{38}) implies that Eq.~(\ref{e4}) has a unique solution $\{X(t)\}_{[t_0,2t_0\wedge T]}$ up to the time $2t_0\wedge T$. Repeating the same procedure we conclude the existence and uniqueness of solutions up to time $T$.

\textbf{Step 3:} It remains to prove the estimate (\ref{0}). Let $X(t)$ be the unique solution to Eq.~(\ref{e4}).
By It\^{o}'s formula (see \cite[Theorem 4.2.5]{LR1}),
\begin{eqnarray*}
\|X(t)\|_H^2=~~\!\!\!\!\!\!\!\!&&\|X_0\|_H^2+\int_0^t2_{V^*}\langle A(s,X(s),\mathcal{L}_{X(s)}),X(s)\rangle_Vds
\nonumber\\
\!\!\!\!\!\!\!\!&&+\int_0^t\|B(s,X(s),\mathcal{L}_{X(s)})\|_{L_2(U,H)}^2ds+M(t),~t\in[0,T],
\end{eqnarray*}
where we denote
\begin{eqnarray*}
M(t):=2\int_0^t\langle X(s),B(s,X(s),\mathcal{L}_{X(s)})dW(s)\rangle_H
\end{eqnarray*}
is a continuous local martingale.

Set a stopping time $\tau_R$  by
$$\tau_R:=\inf\Big\{t\in[0,T]:\|X(t)\|_H\geq R\Big\}\wedge T,~R>0.$$
It is easy to see that $\tau_R\uparrow T$ as $R\uparrow\infty$. Then following from ({\textbf{H}}{\textbf{2}}) that
\begin{eqnarray*}
\!\!\!\!\!\!\!\!&&\mathbb{E}\left[\sup_{s\in[0,t\wedge\tau_R]}\|X(s)\|_H^2\right]+\delta\mathbb{E}\int_0^{t\wedge\tau_R}\|X(s)\|_V^{\alpha}ds
\nonumber\\
\leq~~\!\!\!\!\!\!\!\!&&\mathbb{E}\|X_0\|_H^2+c\mathbb{E}\int_0^{t\wedge\tau_R}\Big(\|X(s)\|_H^{2}+\mathcal{L}_{X(s)}(\|\cdot\|_H^2)\Big)ds
\nonumber\\
\!\!\!\!\!\!\!\!&&+\mathbb{E}\left[\sup_{s\in[0,t\wedge\tau_R]}|M(s)|\right]+\mathbb{E}\int_0^t|f_s|ds.
\end{eqnarray*}
Using the Burkholder-Davis-Gundy's inequality and the growth of $B$ due to (\ref{b1}) leads to
\begin{eqnarray*}
\!\!\!\!\!\!\!\!&&\mathbb{E}\left[\sup_{s\in[0,t\wedge\tau_R]}\|X(s)\|_H^2\right]+\delta\mathbb{E}\int_0^{t\wedge\tau_R}\|X(s)\|_V^{\alpha}ds
\nonumber\\
\leq~~\!\!\!\!\!\!\!\!&&\mathbb{E}\|X_0\|_H^2+c\mathbb{E}\int_0^{t\wedge\tau_R}\Big(\|X(s)\|_H^{2}+\mathcal{L}_{X(s)}(\|\cdot\|_H^2)\Big)ds
\nonumber\\
\!\!\!\!\!\!\!\!&&+8\mathbb{E}\Big(\int_0^{t\wedge\tau_R}\|B(s,X(s),\mathcal{L}_{X(s)})\|_{L_2(U,H)}^2\|X(s)\|_H^2ds\Big)^{\frac{1}{2}}+\mathbb{E}\int_0^t|f_s|ds.
\nonumber\\
\leq~~\!\!\!\!\!\!\!\!&&\mathbb{E}\|X_0\|_H^2+\frac{1}{2}\mathbb{E}\left[\sup_{s\in[0,t\wedge\tau_R]}\|X(s)\|_H^2\right]+C\mathbb{E}\int_0^T|f_s|ds
\nonumber\\
\!\!\!\!\!\!\!\!&&
+C\mathbb{E}\int_0^{t\wedge\tau_R}\Big(\|X(s)\|_H^{2}+\mathcal{L}_{X(s)}(\|\cdot\|_H^2)\Big)ds,
\end{eqnarray*}
where the constant $C>0$ may change from line to line.

Rearranging the above inequality leads to
\begin{eqnarray*}
\!\!\!\!\!\!\!\!&&\mathbb{E}\left[\sup_{s\in[0,t\wedge\tau_R]}\|X(s)\|_H^2\right]+2\delta\mathbb{E}\int_0^{t\wedge\tau_R}\|X(s)\|_V^{\alpha}ds
\nonumber\\
\leq~~\!\!\!\!\!\!\!\!&&2\mathbb{E}\|X_0\|_H^2+C\mathbb{E}\int_0^T|f_s|ds+C\mathbb{E}\int_0^{t}\mathbf{1}_{[0,\tau_R]}(s)\Big(\|X(s)\|_H^{2}+\mathcal{L}_{X(s)}(\|\cdot\|_H^2)\Big)ds
\nonumber\\
\leq~~\!\!\!\!\!\!\!\!&&2\mathbb{E}\|X_0\|_H^2+C\mathbb{E}\int_0^T|f_s|ds+C\int_0^{t}\mathbb{E}\|X(s)\|_H^{2}ds.
\end{eqnarray*}
Let $R\to\infty$, by the monotone convergence theorem we have
\begin{eqnarray*}
\!\!\!\!\!\!\!\!&&\mathbb{E}\left[\sup_{s\in[0,t]}\|X(s)\|_H^2\right]+2\delta\mathbb{E}\int_0^{t}\|X(s)\|_V^{\alpha}ds
\nonumber\\
\leq~~\!\!\!\!\!\!\!\!&&C\mathbb{E}\|X_0\|_H^2+C\mathbb{E}\int_0^T|f_s|ds+C\int_0^{t}\mathbb{E}\left[\sup_{r\in[0,s]}\|X(r)\|_H^{2}\right]ds.
\end{eqnarray*}
Hence, the Gronwall's lemma implies that
\begin{eqnarray}\label{31}
\mathbb{E}\left[\sup_{s\in[0,t]}\|X(s)\|_H^2\right]+2\delta\mathbb{E}\int_0^{t}\|X(s)\|_V^{\alpha}ds\leq Ce^{CT}\big(\mathbb{E}\|X_0\|_H^2+\mathbb{E}\int_0^T|f_s|ds\big),
\end{eqnarray}
which completes the proof of Theorem \ref{t3}.
\end{proof}

\section{Distribution Dependent SPMEs}
\setcounter{equation}{0}
 \setcounter{definition}{0}
In this section, we will investigate the strong/weak existence and uniqueness of solutions for a class of DDSPMEs based on the variational setting established in Section 2.
First we provide some necessary preparations for the function spaces and operators.

Let $(M,\mathcal{B}(M),\mu_M)$ be a Lusin space and $L^2(\mu_M)$ be the space of square integrable functions on $M$ equipped with the norm $|f|_{2}:=\Big(\int_M|f|^2d\mu_M\Big)^{\frac{1}{2}}$ and the scalar product $\langle\cdot,\cdot\rangle_2$, respectively. Denote by  $(L,\mathcal{D}(L))$ a negative definite self-adjoint linear operator generating a strongly continuous (or $C_0$-) contraction sub-Markovian semigroup $\{T_t\}_{t\geq0}$ on $L^2(\mu_M)$ ($i.e.$ for $u\in L^2(\mu_M)$, $0\leq u\leq 1$ implies $0\leq T_tu\leq 1$).
\begin{definition}
Let $(E,\|\cdot\|)$ denote a Banach space. The gamma-transform $V_r$ of a sub-Markovian semigroup $\{T_t\}_{t\geq0}$ on $E$ is given by the Bochner integral
$$V_ru=\Gamma(\frac{r}{2})^{-1}\int_0^\infty s^{\frac{r}{2}-1}e^{-s}T_suds,$$
where $u\in E$ and $r>0$.
\end{definition}

Now we consider a separable Hilbert space $(F_{1,2},\|\cdot\|_{F_{1,2}})$ by $F_{1,2}:=V_1(L^2(\mu_M))$ (cf.~\cite{K2}) equipped with the norm $\|u\|_{F_{1,2}}=|f|_2$, where $f\in L^2(\mu)$ and $u=V_1f$. It is well-known that in this case $V_1=(1-L)^{-\frac{1}{2}}$ (see \cite[Theorem 1.5.3]{FJS}), then it follows that $F_{1,2}=\mathcal{D}((1-L)^{\frac{1}{2}})$ and $\|u\|_{F_{1,2}}=|(1-L)^{\frac{1}{2}}u|_2$. And we use $F_{1,2}^*$ to denote the dual space of $F_{1,2}$, and denote by $\|\cdot\|_{F_{1,2}^*}$ the associated norm and $_{F_{1,2}^*}\langle\cdot,\cdot\rangle_{F_{1,2}}$ the dualization between $F_{1,2}^*$ and $F_{1,2}$.
\begin{Rem}
Note that if the potential space $F_{r,p}$ for $r>0,~p>1$ (in particular $F_{1,2}$) is regular enough, for example, the semigroup $\{T_t\}_{t\geq0}$ is induced by the following elliptic partial differential operator of second order with smooth coefficients
$$L=\sum_{i,j=1}^n a_{ij}(x)\frac{\partial^2}{\partial x_i\partial x_j}+\sum_{i=1}^n b_i(x)\frac{\partial}{\partial x_i}+c(x),$$
where $a_{ij}(x)\in C_b^2(\mathbb{R}^n),1\leq i,j\leq n$, $\sum_{i,j=1}^n a_{ij}(x)\xi^i\xi^j\geq\vartheta|\xi|^2$ for some $\vartheta>0$, $b_i(x)\in C_b^1(\mathbb{R}^n),1\leq i\leq n$ and $c(x)$ is a non-positive bounded function, then $F_{r,p}$ coincides with the classical Sobolev space $W^{r,p}(\mathbb{R}^n)$. For more results on the potential theory we refer to \cite{FJS,JS,K2}.
\end{Rem}

In this section, the Gelfand triple with $H:=F_{1,2}^*$ and $V:=L^2(\mu_M)$ will be chosen to prove our main results. Consider the map $(1-L):F_{1,2}\to F_{1,2}^*$ defined by
$$_{F_{1,2}^*}\langle(1-L)u,v\rangle_{F_{1,2}}:=\int_M(1-L)^{\frac{1}{2}}u\cdot(1-L)^{\frac{1}{2}}vd\mu_M,~~u,v\in F_{1,2}.$$
It is easy to see that this map is well-defined, and now we recall some known results proved in \cite{RWX} for the reader's convenience.
\begin{lemma}\label{l1}
$(1-L):F_{1,2}\to F_{1,2}^*$ is an isometric isomorphism mapping such that
$$\langle(1-L)u,(1-L)v\rangle_{F_{1,2}^*}=\langle u,v\rangle_{F_{1,2}},~u,v\in F_{1,2}.$$
Moreover, $(1-L)^{-1}:F_{1,2}^*\to F_{1,2}$ is the Riesz isomorphism; that is, for $u\in F_{1,2}^*$,
$$\langle u,\cdot\rangle_{F_{1,2}^*}=_{F_{1,2}}\langle (1-L)^{-1}u,\cdot\rangle_{F_{1,2}^*}.$$
\end{lemma}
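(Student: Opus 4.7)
The plan is to view $F_{1,2}$ as a Hilbert space on which $1-L$ acts as the Riesz map into $F_{1,2}^{*}$. First I would unwind the definitions: because $L$ is negative definite self-adjoint, the functional calculus makes $(1-L)^{1/2}$ a positive self-adjoint operator on $L^{2}(\mu_M)$ with spectrum contained in $[1,\infty)$, and by construction $F_{1,2}=V_{1}(L^{2}(\mu_M))=\mathcal{D}((1-L)^{1/2})$ is exactly the range of $(1-L)^{-1/2}$, so $(1-L)^{1/2}:F_{1,2}\to L^{2}(\mu_M)$ is a bijective isometry under the norm $\|u\|_{F_{1,2}}=|(1-L)^{1/2}u|_{2}$. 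Polarizing yields the explicit Hilbert inner product
\[
\langle u,v\rangle_{F_{1,2}}=\int_{M}(1-L)^{1/2}u\cdot(1-L)^{1/2}v\,d\mu_M,\qquad u,v\in F_{1,2}.
\]

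Next I would compare this with the very definition of $1-L$ as a map $F_{1,2}\to F_{1,2}^{*}$ given in the statement, namely ${}_{F_{1,2}^{*}}\langle(1-L)u,v\rangle_{F_{1,2}}=\langle u,v\rangle_{F_{1,2}}$; this says precisely that $(1-L)u\in F_{1,2}^{*}$ is the Riesz representative of the continuous functional $\langle u,\cdot\rangle_{F_{1,2}}$. The Riesz representation theorem then immediately delivers: (i) $1-L$ is a bijection onto $F_{1,2}^{*}$; (ii) $\|(1-L)u\|_{F_{1,2}^{*}}=\sup_{\|v\|_{F_{1,2}}\leq 1}|\langle u,v\rangle_{F_{1,2}}|=\|u\|_{F_{1,2}}$, so $1-L$ is an isometric isomorphism; and (iii) for each $\phi\in F_{1,2}^{*}$ the unique $w\in F_{1,2}$ with $\phi(\cdot)=\langle w,\cdot\rangle_{F_{1,2}}$ equals $(1-L)^{-1}\phi$, so $(1-L)^{-1}$ is exactly the Riesz isomorphism, which is the final assertion.

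The inner product identity $\langle(1-L)u,(1-L)v\rangle_{F_{1,2}^{*}}=\langle u,v\rangle_{F_{1,2}}$ then follows by transport of structure along the isometry $1-L$ (equivalently, by polarizing the norm identity just obtained, since the $F_{1,2}^{*}$ inner product is characterized by making $(1-L)^{-1}$ the Riesz map). The displayed formula at the end of the lemma is nothing more than this Riesz identification written out in terms of the dualization bracket ${}_{F_{1,2}^{*}}\langle\cdot,\cdot\rangle_{F_{1,2}}$.

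There is no genuinely deep obstacle here; the content is formal Hilbert-space bookkeeping. The only point demanding care — the ``hard part'' in a weak sense — is verifying that the standing hypotheses on $L$ (negative definite self-adjoint generator of a $C_{0}$-contraction sub-Markovian semigroup) really do guarantee, via the functional calculus, that $(1-L)^{1/2}:F_{1,2}\to L^{2}(\mu_M)$ is a bijective isometry onto $L^{2}(\mu_M)$ rather than merely a densely defined closed operator. Once that is granted, the whole lemma is an immediate consequence of the Riesz representation theorem applied in $F_{1,2}$.
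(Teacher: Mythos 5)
Your proposal is correct. The paper itself offers no proof of this lemma---it is explicitly recalled from \cite{RWX}---but your argument is precisely the standard one: since $(1-L)^{1/2}$ is a bijective isometry from $F_{1,2}$ onto $L^2(\mu_M)$, the defining formula ${}_{F_{1,2}^*}\langle(1-L)u,v\rangle_{F_{1,2}}=\langle u,v\rangle_{F_{1,2}}$ exhibits $1-L$ as the Riesz map of the Hilbert space $F_{1,2}$, and all three assertions (isometric isomorphism, the inner-product identity by polarization, and the identification of $(1-L)^{-1}$ with the Riesz isomorphism) follow at once from the Riesz representation theorem.
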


In fact the space $L^2(\mu_M)$  is a subset of $F_{1,2}^*$ and the embedding $L^2(\mu_M)\subset F_{1,2}^*$ is continuous and dense (see \cite[Lemma 1.5.6]{FJS}). Hence, we have the following Gelfand triple
\begin{equation}\label{triple}
V=L^2(\mu_M)\subset H=F_{1,2}^*(\cong F_{1,2})\subset (L^2(\mu_M))^*=V^*.
\end{equation}
\begin{lemma}\label{l2}
The map $(1-L):F_{1,2}\to (L^2(\mu_M))^*$ has (unique) continuous extension
$$(1-L):L^2(\mu_M)\to (L^2(\mu_M))^*,$$
which is linear isometric. Moreover, for  $u,v\in L^2(\mu_M)$,
$$_{(L^2(\mu_M))^*}\langle(1-L)u,v\rangle_{L^2(\mu_M)}=\int_Mu\cdot vd\mu_M.$$
\end{lemma}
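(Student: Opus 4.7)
The plan is to unravel the Gelfand triple identifications so that the composite map $(1-L)\colon F_{1,2}\xrightarrow{\text{Lem.~}\ref{l1}}F_{1,2}^*=H\hookrightarrow V^*=(L^2(\mu_M))^*$ is seen to coincide with the natural inclusion $F_{1,2}\hookrightarrow L^2(\mu_M)$ composed with the Riesz isomorphism $L^2(\mu_M)\cong(L^2(\mu_M))^*$. Once this is recognized, the extension to all of $L^2(\mu_M)$ is forced by density and will simply be the Riesz isomorphism itself, so linearity and the isometry property will be automatic.

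To carry this out, I would fix $u\in F_{1,2}$ and $v\in V=L^2(\mu_M)$, view $v$ inside $H=F_{1,2}^*$ as the functional $\phi\mapsto\int_M v\phi\,d\mu_M$ on $F_{1,2}$, and then apply the Gelfand triple identity ${}_{V^*}\langle\cdot,\cdot\rangle_V|_{H\times V}=\langle\cdot,\cdot\rangle_H$ together with Lemma \ref{l1}, obtaining
\[
{}_{V^*}\langle(1-L)u,v\rangle_V=\langle(1-L)u,v\rangle_{F_{1,2}^*}={}_{F_{1,2}}\langle u,v\rangle_{F_{1,2}^*}=\int_M uv\,d\mu_M.
\]
This proves the pairing formula on $F_{1,2}\times L^2(\mu_M)$ and, equivalently, identifies $(1-L)u\in V^*$ with $u$ itself under the Riesz isomorphism of $L^2(\mu_M)$.

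Next, since $F_{1,2}=V_1(L^2(\mu_M))$ is dense in $L^2(\mu_M)$ (as $V_1=(1-L)^{-1/2}$ is a bounded, self-adjoint, injective operator on $L^2$ with dense range), and the map from $F_{1,2}$ into $(L^2(\mu_M))^*$ is isometric when $F_{1,2}$ is equipped with its inherited $L^2$-norm, there is a unique continuous extension $(1-L)\colon L^2(\mu_M)\to(L^2(\mu_M))^*$. This extension is exactly the Riesz isomorphism of $L^2(\mu_M)$, which is linear and isometric, and the pairing identity propagates from $F_{1,2}\times L^2(\mu_M)$ to all of $L^2(\mu_M)\times L^2(\mu_M)$ by continuity.

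The only place I expect to need care is the first chain of equalities: one must track that $v\in V$ is first reinterpreted as a functional in $H=F_{1,2}^*$ via $\phi\mapsto\int v\phi\,d\mu_M$, that Lemma \ref{l1} converts the inner product in $F_{1,2}^*$ into the natural dual pairing between $F_{1,2}$ and $F_{1,2}^*$, and that this pairing then collapses to $\int uv\,d\mu_M$ precisely because of the explicit form of the functional representing $v$. Once these identifications are transparent, the remainder is the standard Banach space fact that a continuous linear map from a dense subspace extends uniquely, which is why no further work is needed to obtain the linear isometric extension.
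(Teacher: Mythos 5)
Your argument is correct: the chain ${}_{V^*}\langle(1-L)u,v\rangle_V=\langle(1-L)u,v\rangle_{F_{1,2}^*}={}_{F_{1,2}}\langle u,v\rangle_{F_{1,2}^*}=\int_M uv\,d\mu_M$ is exactly the right use of the Gelfand-triple restriction identity and of the Riesz-isomorphism part of Lemma \ref{l1}, and density of $F_{1,2}=\mathcal{D}((1-L)^{1/2})$ in $L^2(\mu_M)$ then gives the unique isometric extension (which is the $L^2$-Riesz map). The paper itself defers this lemma to the reference [RWX] rather than proving it, but the remark immediately following it runs the same chain of identifications in reverse, so your route coincides with the paper's intended one.
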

\begin{Rem}
In fact, in this case $(1-L):L^2(\mu_M)\to (L^2(\mu_M))^*$ is an isometric isomorphism map. Indeed, for any $T\in(L^2(\mu_M))^*$, there exists $u\in L^2(\mu_M)$ such that for all $v\in L^2(\mu_M)$,
$$_{(L^2(\mu_M))^*}\langle T,v\rangle_{L^2(\mu_M)}=\langle u,v\rangle_2=\lim_{n\to\infty}\langle u_n,v\rangle_2,$$
where $u_n\in F_{1,2}$ such that $\lim_{n\to\infty}u_n=u$ in $L^2(\mu_M)$. Therefore, for $v\in L^2(\mu_M)$,
\begin{eqnarray*}
_{(L^2(\mu_M))^*}\langle T,v\rangle_{L^2(\mu_M)}=~~\!\!\!\!\!\!\!\!&&\lim_{n\to\infty}\langle u_n,v\rangle_2
\nonumber\\
=~~\!\!\!\!\!\!\!\!&&\lim_{n\to\infty}\langle u_n,(1-L)(1-L)^{-1}v\rangle_2
\nonumber\\
=~~\!\!\!\!\!\!\!\!&&\lim_{n\to\infty}{}_{F_{1,2}^*}\langle (1-L)u_n,(1-L)^{-1}v\rangle_{F_{1,2}}
\nonumber\\
=~~\!\!\!\!\!\!\!\!&&\lim_{n\to\infty}\langle (1-L)u_n,v\rangle_{F_{1,2}^*}
\nonumber\\
=~~\!\!\!\!\!\!\!\!&&\lim_{n\to\infty}{}_{(L^2(\mu_M))^*}\langle (1-L)u_n,v\rangle_{L^2(\mu_M)}
\nonumber\\
=~~\!\!\!\!\!\!\!\!&&_{(L^2(\mu_M))^*}\langle (1-L)u,v\rangle_{L^2(\mu_M)},
\end{eqnarray*}
where we used Lemma \ref{l1} in the fourth step, which implies the assertion.
\end{Rem}

Now we state the precise assumptions on the coefficients of Eq.~(\ref{e1}).
\begin{hypothesis}
\label{a1}
There exist some constants $\alpha_0,\alpha_1,\alpha_2,\alpha_3,K_1,K_2>0$ such that the following conditions hold.
\begin{enumerate}
\item [$({\mathbf{A}}{\mathbf{1}})$] Let $\Psi:[0,T]\times\Omega\times\mathbb{R}\times\mathcal{P}(H)\to\mathbb{R}$ be progressively measurable; that is, for any $t\in[0,T]$, this map restricted to $[0,t]\times\Omega\times\mathbb{R}\times\mathcal{P}(H)$ is $\mathcal{B}([0,t])\times\mathcal{F}_t\times\mathcal{B}(\mathbb{R})\times\mathcal{B}(\mathcal{P}(H))$-measurable. For all $s,r\in\mathbb{R}$ and $\mu,\nu\in\mathcal{P}(H)$
$$\big(\Psi(\cdot,s,\mu)-\Psi(\cdot,r,\nu)\big)(s-r)\geq0  ~~\text{on}~~ [0,T]\times\Omega.$$

\item [$({\mathbf{A}}{\mathbf{2}})$] The map $\Psi:[0,T]\times\Omega\times V\times\mathcal{P}(H)\to V$ satisfies the following Lipschitz nonlinearity
$$|\Psi(\cdot,u,\mu)-\Psi(\cdot,v,\nu)|_2\leq \alpha_0\big(|u-v|_2+\mathbb{W}_{2,H}(\mu,\nu)\big)~\text{on}~[0,T]\times\Omega,$$
where $u,v\in V,~\mu,\nu\in\mathcal{P}_2(H)$. And $\Psi(\cdot,0,\delta_0)\equiv0$,
here $\delta_0$ is the Dirac measure at $0$.

\item [$({\mathbf{A}}{\mathbf{3}})$] For all $u,v\in V$ and $\mu,\nu\in\mathcal{P}_2(H)$,
\begin{eqnarray*}
\!\!\!\!\!\!\!\!&&2\int_M\big(\Psi(\cdot,u,\mu)-\Psi(\cdot,v,\nu)\big)(u-v)d\mu_M
\nonumber\\
~~~~\geq~~\!\!\!\!\!\!\!\!&&\alpha_1|\Psi(\cdot,u,\mu)-\Psi(\cdot,v,\nu)|_2^2-\alpha_2\mathbb{W}_{2,H}(\mu,\nu)^2-\alpha_3\|u-v\|_{F_{1,2}^*}^2~\text{on}~[0,T]\times\Omega.
\end{eqnarray*}

\item [$({\mathbf{A}}{\mathbf{4}})$] $B:[0,T]\times\Omega\times V\times\mathcal{P}(H)\to L_2(U,V)$ is  progressively measurable and satisfies
 \begin{equation}\label{es1}
 \|B(\cdot,u,\mu)-B(\cdot,v,\nu)\|_{L_2(U,H)}^2\leq K_1\big(\|u-v\|_{F_{1,2}^*}^2+\mathbb{W}_{2,H}(\mu,\nu)^2\big)~\text{on}~[0,T]\times\Omega,
 \end{equation}
\begin{equation}\label{33}
\|B(\cdot,u,\mu)\|_{L_2(U,V)}^2\leq K_2\big(1+|u|_2^2+\mu(\|\cdot\|_{F_{1,2}^*}^2)\big)~\text{on}~[0,T]\times\Omega,
\end{equation}
where $u,v\in V,~\mu,\nu\in\mathcal{P}_2(H)$. And $\|B(\cdot,0,\delta_0)\|_{L_2(U,H)}$ is bounded on $[0,T]\times\Omega$.
\end{enumerate}
\end{hypothesis}
\begin{Rem}
(i) We remark that the growth condition (\ref{33}) of $B(t,u,\mu)$ on $L_2(U,V)$ is  assumed to guarantee a priori estimate of solutions on $V$ (see Lemma \ref{l3}).

(ii) In the distribution independent case (see~e.g.~\cite[Section 2.1]{BDR2}), if $({\mathbf{A}}{\mathbf{1}})$ holds and $\Psi:\mathbb{R}\to\mathbb{R}$ is a monotonically nondecreasing Lipschitz function, then it is obvious that
$$(\Psi(r)-\Psi(s))(r-s)\geq \mathrm{Lip}(\Psi)^{-1}(\Psi(r)-\Psi(s))^2,~r,s\in\mathbb{R},$$
where $\mathrm{Lip}(\Psi)$ is the Lipschitz constant of $\Psi$, which implies that $({\mathbf{A}}{\mathbf{3}})$ holds.

(iii) There are several important physical models involved such equations satisfying $({\mathbf{A}}{\mathbf{1}})$-$({\mathbf{A}}{\mathbf{3}})$, e.g. the celebrated two-phase Stefan problem forced by Gaussian noise. This model characterizes the situation that the solidification or melting process is forced by a stochastic heat flow, we refer to \cite{BDR2} (see also \cite{EO}) for the physical motivations.  
\end{Rem}

\begin{Exm}
For the reader's convenience, here we give a concrete example for $B$ satisfying  ({\textbf{A}}{\textbf{4}}) to illustrate the dependence on distribution, which is a straightforward generalization to the infinite dimensional case from well-known example in finite dimensions (cf.~e.g.\cite[Example 2.16]{Lacker}). Let $B_0:[0,T]\times\Omega\times H\to L_2(U,V)$ fulfills that
\begin{equation}\label{34}
\|B_0(\cdot,x)-B_0(\cdot,y)\|_{L_2(U,H)}^2\leq C_0\|x-y\|_H^2,~x,y\in H,
\end{equation}
\begin{equation}\label{35}
\|B_0(\cdot,x)\|_{L_2(U,V)}^2\leq C_1(1+\|x\|_H^2),~x,y\in H.
\end{equation}
For $\alpha\in\mathbb{R}$,  we consider the following map
$$B^{\alpha}(t,u,\mu):=\int_HB_0(t,u-\alpha z)\mu(dz), ~~  u\in V, \mu\in\mathcal{P}_2(H). $$
Then ({\textbf{A}}{\textbf{4}}) holds for $B=B^{\alpha}$.

\begin{proof}
For $\alpha\in\mathbb{R}$, $u,v\in V$, $\mu,\nu\in\mathcal{P}_2(H)$  and $\pi\in\mathcal{C}(\mu,\nu)$, we have
\begin{eqnarray*}
\!\!\!\!\!\!\!\!&&\|B^{\alpha}(\cdot,u,\mu)-B^{\alpha}(\cdot,v,\nu)\|_{L_2(U,H)}^2
\nonumber\\
=\!\!\!\!\!\!\!\!&&\Big\|\int_HB_0(\cdot,u-\alpha z)\mu(dz)-\int_HB_0(\cdot,v-\alpha \tilde{z})\nu(d\tilde{z})\Big\|_{L_2(U,H)}^2
\nonumber\\
=\!\!\!\!\!\!\!\!&&\Big\|\int_{H\times H}\Big[\big(B_0(\cdot,u-\alpha z)-B_0(\cdot,v-\alpha z)\big)+\big(B_0(\cdot,v-\alpha z)-B_0(\cdot,v-\alpha \tilde{z})\big)\Big]\pi(dz,d\tilde{z})\Big\|_{L_2(U,H)}^2
\nonumber\\
\leq\!\!\!\!\!\!\!\!&&2C_0\|u-v\|_H^2+2\alpha^2\int_{H\times H}\|z-\tilde{z}\|_H^2\pi(dz,d\tilde{z}),
\end{eqnarray*}
where we used (\ref{34}) in the last step. Then
$$\|B^{\alpha}(\cdot,u,\mu)-B^{\alpha}(\cdot,v,\nu)\|_{L_2(U,H)}^2\leq2C_0\|u-v\|_H^2+2\alpha^2\mathbb{W}_{2,H}(\mu,\nu)^2.$$
Meanwhile, taking (\ref{35}) into account, for any $\alpha\in\mathbb{R}$, $u\in V$, $\mu\in\mathcal{P}_2(H)$, we have
\begin{eqnarray*}
\|B^{\alpha}(\cdot,u,\mu)\|_{L_2(U,V)}^2=\!\!\!\!\!\!\!\!&&\Big\|\int_HB_0(\cdot,u-\alpha z)\mu(dz)\Big\|_{L_2(U,V)}^2
\nonumber\\
\leq\!\!\!\!\!\!\!\!&&C_1\int_H(1+\|u-\alpha z\|_H^2)\mu(dz)
\nonumber\\
\leq\!\!\!\!\!\!\!\!&&C_2\int_H(1+|u|_2^2+\alpha^2\|z\|_H^2)\mu(dz)
\nonumber\\
\leq\!\!\!\!\!\!\!\!&&C_3\big(1+|u|_2^2+\mu(\|\cdot\|_H^2)\big),
\end{eqnarray*}
for some constants $C_2,C_3>0$.

Hence ({\textbf{A}}{\textbf{4}}) holds for $B=B^{\alpha}$, $K_1=2(C_0\vee \alpha^2)$ and $K_2=C_3$.
\end{proof}
\end{Exm}

Below we recall the definitions of strong and weak solutions to Eq.~(\ref{e1}).
\begin{definition}
We call a continuous $(\mathcal{F}_t)_{t\geq0}$-adapted process $X:[0,T]\to F_{1,2}^*$ is a (strong) solution to Eq.~(\ref{e1}) with initial point $X(0)\in L^2(\Omega,\mathbb{P};F_{1,2}^*)$, if for any $T>0$,
$$X\in L^2([0,T]\times\Omega;L^2(\mu_M))\cap L^2(\Omega;C([0,T];F_{1,2}^*)),$$
$$\int_0^{\cdot}\Psi(s,X(s),\mathcal{L}_{X(s)})ds\in C([0,T];F_{1,2}),~\mathbb{P}\text{-a.s.},$$
and the following identity holds $\mathbb{P}\text{-a.s.}$,
$$X(t)-L\int_0^t\Psi(s,X(s),\mathcal{L}_{X(s)})ds=X(0)+\int_0^tB(s,X(s),\mathcal{L}_{X(s)})dW(s),~t\in[0,T].$$
\end{definition}

\begin{definition}
(i) A pair $(\tilde{X}(t),\tilde{W}(t))$ is called a weak solution to Eq.~(\ref{e1}), if there exists a cylindrical Wiener process $\{\tilde{W}(t)\}_{t\geq 0}$ with respect to the stochastic basis $(\tilde{\Omega},\{\tilde{\mathcal{F}}_t\}_{t\geq 0},\tilde{\mathbb{P}})$ such that $(\tilde{X}(t),\tilde{W}(t))$ solves the following DDSPDE:
$$\tilde{X}(t)-L\int_0^t\Psi(s,\tilde{X}(s),\mathcal{L}_{\tilde{X}(s)})ds=\tilde{X}(0)+\int_0^tB(s,\tilde{X}(s),\mathcal{L}_{\tilde{X}(s)})d\tilde{W}(s),~t\in[0,T].$$

(ii) We say Eq.~(\ref{e1}) has weak uniqueness in $\mathcal{P}_2(H)$ if $(\tilde{X}(t),\tilde{W}(t))$ with respect to the stochastic basis $(\tilde{\Omega},\{\tilde{\mathcal{F}}_t\}_{t\geq 0},\tilde{\mathbb{P}})$ and $(\bar{X}(t),\bar{W}(t))$ with respect to $(\bar{\Omega},\{\bar{\mathcal{F}}_t\}_{t\geq 0},\bar{\mathbb{P}})$ are two weak solutions to Eq.~(\ref{e1}), then $\mathcal{L}_{\tilde{X}(0)}|_{\tilde{\mathbb{P}}}=\mathcal{L}_{\bar{X}(0)}|_{\bar{\mathbb{P}}}\in\mathcal{P}_2(H)$ implies that $\mathcal{L}_{\tilde{X}(t)}|_{\tilde{\mathbb{P}}}=\mathcal{L}_{\bar{X}(t)}|_{\bar{\mathbb{P}}}\in\mathcal{P}_2(H)$.
\end{definition}

We now formulate the main existence and uniqueness result of this section.
\begin{theorem}\label{t1}
Suppose that  $({\mathbf{A}}{\mathbf{1}})$-$({\mathbf{A}}{\mathbf{4}})$ hold.
For any initial condition $X_0\in L^2(\Omega,\mathbb{P};V)$,  Eq.~(\ref{e1}) has strong/weak existence and uniqueness of solutions, which fulfills that
$$\mathbb{E}\big[\sup_{t\in[0,T]}|X(t)|_2^2\big] < \infty.$$
\end{theorem}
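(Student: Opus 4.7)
My plan is to combine a Yosida-type regularization with the variational framework of Section 2, extract uniform estimates, identify the limit via a Minty-type monotonicity trick in the presence of distribution dependence, and close uniqueness by an energy estimate in $H=F_{1,2}^*$ controlled through (A3) and a Wasserstein bound. The abstract framework of Theorem \ref{t3} does not directly apply to Eq.~(\ref{e1}) itself (see Remark 3.1), but will apply to the regularized problem.

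\textbf{Approximation and uniform estimates.} For $\varepsilon\in(0,1)$, I introduce $\Psi_\varepsilon(t,u,\mu):=\Psi(t,u,\mu)+\varepsilon u$ and study
\begin{equation*}
dX_\varepsilon(t)=L\Psi_\varepsilon(t,X_\varepsilon,\mathcal{L}_{X_\varepsilon})\,dt+B(t,X_\varepsilon,\mathcal{L}_{X_\varepsilon})\,dW(t).
\end{equation*}
With the Gelfand triple (\ref{triple}) ($V=L^2(\mu_M)$, $H=F_{1,2}^*$, $\alpha=2$), I would check (H1)--(H4): Lemma \ref{l2} extends $L$ to an isometric map $V\to V^*$, giving the bound $\|L\Psi_\varepsilon\|_{V^*}\leq C|u|_2+C\mathbb{W}_{2,H}(\mu,\delta_0)$ via (A2); demicontinuity follows from (A2); monotonicity of $L\Psi$ (from (A3) after using $_{V^*}\langle Lw,u\rangle_V=-\langle w,u\rangle_2+\langle w,u\rangle_H$ and absorbing the $H$-cross term by $|w|_2^2$ thanks to the coefficient $\alpha_1$) plus the $\varepsilon L u$ contribution produces both (H3) and the crucial coercive term $-2\varepsilon|u|_2^2$ needed for (H2). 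Theorem \ref{t3} then yields a unique solution $X_\varepsilon$ and the bound (\ref{0}) gives $\mathbb{E}\sup_{t\leq T}\|X_\varepsilon(t)\|_{F_{1,2}^*}^2\leq C_T$ \emph{uniformly} in $\varepsilon$. To obtain the $V$-estimate $\mathbb{E}\int_0^T|X_\varepsilon(t)|_2^2\,dt\leq C_T$ (which is \emph{not} uniform from the $\delta=2\varepsilon$ coercivity alone), I apply It\^o's formula to $|X_\varepsilon|_2^2$ and exploit the $L_2(U,V)$-growth condition (\ref{33}) on $B$ together with the sign $\langle L\Psi_\varepsilon(X_\varepsilon),X_\varepsilon\rangle_2\leq 0$ implied by (A1)--(A2). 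Finally (A3) tested against $(v,\nu)=(0,\delta_0)$ yields a uniform bound on $\mathbb{E}\int_0^T|\Psi(X_\varepsilon,\mathcal{L}_{X_\varepsilon})|_2^2\,dt$.

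\textbf{Passing to the limit and uniqueness.} Along a subsequence I extract weak limits $X_\varepsilon\rightharpoonup X$ in $L^2([0,T]\times\Omega;V)$ and $\Psi(X_\varepsilon,\mathcal{L}_{X_\varepsilon})\rightharpoonup Y$ in the same space, while the uniform $L^2(\Omega;C([0,T];H))$-bound combined with the Ascoli--Prokhorov argument yields $\mathcal{L}_{X_\varepsilon}\to\mathcal{L}_X$ in $C([0,T];\mathcal{P}_2(H))$. Passing $\varepsilon\to0$ in the mild/variational identity gives $X(t)=X_0+L\int_0^tY(s)\,ds+\int_0^tB(s,X,\mathcal{L}_X)\,dW(s)$, and the identification $Y=\Psi(\cdot,X,\mathcal{L}_X)$ is obtained by the Minty trick: applying It\^o to $\|X_\varepsilon\|_{F_{1,2}^*}^2$, lower-semicontinuity gives $\limsup_\varepsilon 2\int_0^T\!\!\int_M\Psi_\varepsilon(X_\varepsilon)X_\varepsilon\,d\mu_M dt$ is bounded by the analogous expression for $X,Y$, and then (A3) tested against arbitrary $v\in L^2([0,T]\times\Omega;V)$ and $\nu(t)=\mathcal{L}_{X(t)}$ forces $Y=\Psi(\cdot,X,\mathcal{L}_X)$. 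For pathwise uniqueness, given two solutions $X^1,X^2$ with the same initial law I apply It\^o to $\|X^1-X^2\|_{F_{1,2}^*}^2$; Lemmas \ref{l1}--\ref{l2} rewrite the drift contribution as $-2\int_M(\Psi(X^1,\mathcal{L}_{X^1})-\Psi(X^2,\mathcal{L}_{X^2}))(X^1-X^2)d\mu_M$, whence (A3), (A4) and $\mathbb{W}_{2,H}(\mathcal{L}_{X^1},\mathcal{L}_{X^2})^2\leq\mathbb{E}\|X^1-X^2\|_H^2$ together with Gronwall yield $X^1\equiv X^2$. Weak existence and weak uniqueness in $\mathcal{P}_2(H)$ then follow by the Yamada--Watanabe-type reasoning standard for McKean--Vlasov SPDEs. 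The main obstacle I anticipate is the Minty step: the distribution dependence couples the weak convergence of $X_\varepsilon$ to that of its laws, so one must carefully propagate the $\mathbb{W}_{2,H}$-control through the monotonicity bound (A3) when comparing the regularized and limiting equations.
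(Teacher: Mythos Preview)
Your single regularization $L(\Psi+\varepsilon u)$ is actually a tidier choice than the paper's two-parameter scheme $(L-\epsilon)(\Psi+\lambda\,\cdot)$ followed by $\lambda\to 0$ and then $\epsilon\to 0$, and your verification of (H1)--(H4) is essentially correct. One point you understate: the It\^o computation for $|X_\varepsilon|_2^2$ and the claimed sign $\langle L\Psi_\varepsilon(X_\varepsilon),X_\varepsilon\rangle_2\le 0$ are not immediate, because $\Psi_\varepsilon(X_\varepsilon)$ need not lie in $\mathcal{D}(L)$. The paper handles this by conjugating with $(\delta-L)^{-1/2}$, working in the triple $F_{1,2}\subset L^2(\mu_M)\subset F_{1,2}^*$, using the sub-Markovian kernel representation of $\delta(\delta-L)^{-1}$ together with the pointwise monotonicity (A1), and letting $\delta\to\infty$ (Lemma~\ref{l3}). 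This is real work, not a one-liner, but it is compatible with your scheme.

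The genuine gap is the passage to the limit. A uniform bound $\mathbb{E}\sup_t\|X_\varepsilon(t)\|_H^2\le C$ gives neither equicontinuity of $t\mapsto\mathcal{L}_{X_\varepsilon(t)}$ in $\mathbb{W}_{2,H}$ nor pointwise relative compactness of $\{\mathcal{L}_{X_\varepsilon(t)}\}_\varepsilon$ in $\mathcal{P}_2(H)$, so no Ascoli--Prokhorov argument is available. More fundamentally, weak convergence $X_\varepsilon\rightharpoonup X$ in $L^2([0,T]\times\Omega;V)$ does \emph{not} identify any subsequential limit of the laws with $\mathcal{L}_X$; in McKean--Vlasov problems this coupling of trajectories and laws is exactly what forces a stronger mode of convergence, and you flag this obstacle yourself without resolving it. The paper bypasses the issue entirely by proving that the approximations form a Cauchy net in $L^2(\Omega;C([0,T];F_{1,2}^*))$: applying It\^o to $\|X_\varepsilon-X_{\tilde\varepsilon}\|_{F_{1,2}^*}^2$ and invoking (A3) one obtains both $\mathbb{E}\sup_t\|X_\varepsilon-X_{\tilde\varepsilon}\|_{F_{1,2}^*}^2$ and $\mathbb{E}\int_0^T|\Psi(\cdot,X_\varepsilon,\mathcal{L}_{X_\varepsilon})-\Psi(\cdot,X_{\tilde\varepsilon},\mathcal{L}_{X_{\tilde\varepsilon}})|_2^2\,dt$ bounded by $C(\varepsilon+\tilde\varepsilon)$, after the residual $\varepsilon X_\varepsilon-\tilde\varepsilon X_{\tilde\varepsilon}$ is controlled with the uniform $|\cdot|_2$-bound just obtained. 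Strong convergence in $H$ immediately gives $\mathbb{W}_{2,H}(\mathcal{L}_{X_\varepsilon(t)},\mathcal{L}_{X(t)})\to 0$, after which the Minty identification and the $B$-term converge without difficulty. Your own regularization admits exactly this Cauchy argument (it is essentially the paper's $\lambda\to 0$ step with $\epsilon=0$); the weak-compactness route you sketched does not close.
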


%

\subsection{Approximations to Eq.~(\ref{e1})}
In order to prove the main result, we consider the following approximating equation
\begin{equation}\label{e2}
\left\{ \begin{aligned}
&dX^\epsilon(t)=(L-\epsilon)\Psi(t,X^\epsilon(t),\mathcal{L}_{X^\epsilon(t)})dt+B(t,X^\epsilon(t),\mathcal{L}_{X^\epsilon(t)})dW(t),\\
&X^\epsilon(0)=X(0),
\end{aligned} \right.
\end{equation}
here $\epsilon\in(0,1)$.

\begin{theorem}\label{t2}
Assume that the conditions $({\mathbf{A}}{\mathbf{1}})$-$({\mathbf{A}}{\mathbf{4}})$ hold.
For any initial point $X(0)\in L^2(\Omega,\mathbb{P};V)$, there exists a unique solution denoted by $\{X^\epsilon(t)\}_{t\geq0}$ to Eq.~(\ref{e2}) such that for any $T>0$,
\begin{equation}\label{4}
X^\epsilon\in L^2([0,T]\times\Omega;V)\cap L^2(\Omega;C([0,T];H),
\end{equation}
and $\mathbb{P}\text{-a.s.}$,
\begin{equation}\label{5}
X^\epsilon(t)+(\epsilon-L)\int_0^t\Psi(s,X^\epsilon(s),\mathcal{L}_{X^\epsilon(s)})ds=X(0)+\int_0^tB(s,X^\epsilon(s),\mathcal{L}_{X^\epsilon(s)})dW(s),~t\in[0,T].
\end{equation}
Moreover, the following estimate fulfills for each $\epsilon\in(0,1)$,
\begin{equation}\label{6}
\mathbb{E}\big[\sup_{t\in[0,T]}|X^\epsilon(t)|_2^2\big]\leq C_T.
\end{equation}
\end{theorem}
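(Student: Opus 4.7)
The plan is to cast (\ref{e2}) as a distribution-dependent monotone SPDE in the Gelfand triple $V=L^2(\mu_M)\subset H=F_{1,2}^*\subset V^*$ from (\ref{triple}) and then invoke Theorem \ref{t3}. I would define the drift operator
$$A^\epsilon(t,u,\mu):=(L-\epsilon)\Psi(t,u,\mu),$$
which lies in $V^*$: indeed $\Psi(t,u,\mu)\in V$ by (A2), while the decomposition $L-\epsilon=-(1-L)+(1-\epsilon)$ combined with Lemma \ref{l2} gives a bounded linear map $V\to V^*$. The resulting pairing identity
$${}_{V^*}\!\langle A^\epsilon(t,u,\mu),v\rangle_V=-\!\int_M\!\Psi(t,u,\mu)\,v\,d\mu_M+(1-\epsilon)\langle\Psi(t,u,\mu),v\rangle_H,\qquad v\in V,$$
will serve as the computational workhorse for the verification of the hypotheses.

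Next I would verify (H1)--(H4) of Section 2 with coercivity parameter $\alpha=2$. Continuity (H1) and growth (H4) are immediate from (A2), (A4) and the pairing identity above. For the monotonicity (H3), writing $\Delta_\Psi:=\Psi(\cdot,u,\mu)-\Psi(\cdot,v,\nu)\in V$,
$$2\,{}_{V^*}\!\langle A^\epsilon(u,\mu)-A^\epsilon(v,\nu),u-v\rangle_V=-2\!\int_M\!\Delta_\Psi(u-v)\,d\mu_M+2(1-\epsilon)\langle\Delta_\Psi,u-v\rangle_H;$$
I would bound the first summand via (A3) and the second via Young's inequality together with the contractive embedding $\|\cdot\|_H\leq|\cdot|_2$ on $V$, absorbing the resulting $|\Delta_\Psi|_2^2$ into the favorable term from (A3); combined with the Lipschitz bound (A4) on $B$, this produces (H3). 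Specialising $v=0$, $\nu=\delta_0$ together with $\Psi(\cdot,0,\delta_0)=0$ then yields the coercivity (H2). Theorem \ref{t3} delivers a unique $X^\epsilon\in L^2([0,T]\times\Omega;V)\cap L^2(\Omega;C([0,T];H))$ satisfying (\ref{5}).

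For the $\epsilon$-uniform bound (\ref{6}), I would apply It\^o's formula to $|X^\epsilon(t)|_2^2$ in $V=L^2(\mu_M)$, rather than the $\|\cdot\|_H^2$ version supplied by Theorem \ref{t3}. Both deterministic drift contributions are nonpositive: the term $-\epsilon\!\int_M\! X^\epsilon\Psi(\cdot,X^\epsilon,\mathcal{L}_{X^\epsilon})\,d\mu_M\leq0$ by (A1) combined with $\Psi(\cdot,0,\delta_0)=0$ (which forces $\Psi(r,\mu)\,r\geq0$ pointwise), and the term $\int_M X^\epsilon\,L\Psi(\cdot,X^\epsilon,\mathcal{L}_{X^\epsilon})\,d\mu_M\leq0$ by the Beurling--Deny / Dirichlet-form property of the negative-definite self-adjoint sub-Markovian generator $L$ applied to the nondecreasing Lipschitz map $r\mapsto\Psi(r,\mu)$ vanishing at the origin. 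Feeding in the growth bound (\ref{33}) for $\|B\|_{L_2(U,V)}^2$ and combining with the Burkholder--Davis--Gundy inequality and Gronwall's lemma, I arrive at $\mathbb{E}[\sup_{t\in[0,T]}|X^\epsilon(t)|_2^2]\leq C_T(1+\mathbb{E}|X(0)|_2^2)$ with $C_T$ independent of $\epsilon\in(0,1)$.

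The hard part is justifying this last It\^o formula: the formula \cite[Theorem 4.2.5]{LR1} driving Theorem \ref{t3} operates at the $H$-level because the drift sits in $V^*$, and $|\cdot|_2^2$ is not continuous on $H=F_{1,2}^*$. I would handle this by a finite-dimensional Galerkin/regularisation step --- replacing $\Psi$ and $B$ by smooth approximants on a finite-dimensional subspace where the $V$-valued It\^o formula is elementary, deriving the $V$-estimate there (the sign of the drift being preserved under approximation thanks to the monotonicity of $\Psi$ and the Beurling--Deny bound), and then passing to the limit via the uniqueness from the first half of the proof together with the lower semicontinuity of $|\cdot|_2^2$. The $\epsilon$-independence of $C_T$ is precisely what is needed for the forthcoming $\epsilon\to0$ passage in the proof of Theorem \ref{t1}.
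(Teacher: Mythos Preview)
Your proposal contains a genuine gap at the coercivity step. Specialising the monotonicity estimate to $v=0$, $\nu=\delta_0$ does \emph{not} produce $(\mathbf{H2})$: the right-hand side you obtain is of the form $c\|u\|_H^2+c\mu(\|\cdot\|_H^2)+f_t$, with no $-\delta\|u\|_V^\alpha$ term. Concretely, your computation gives
\[
2\,{}_{V^*}\!\langle A^\epsilon(u,\mu),u\rangle_V\leq -\alpha_1|\Psi(u,\mu)|_2^2+\alpha_2\mu(\|\cdot\|_H^2)+\alpha_3\|u\|_H^2+\alpha_1|\Psi(u,\mu)|_2^2+C_\epsilon\|u\|_H^2,
\]
and the favourable $-\alpha_1|\Psi(u,\mu)|_2^2$ is exactly cancelled by the Young term; nothing in $(\mathbf{A1})$--$(\mathbf{A3})$ forces $|\Psi(u,\mu)|_2^2$ to dominate $|u|_2^2$ (take e.g.\ $\Psi\equiv 0$, which satisfies all of $(\mathbf{A1})$--$(\mathbf{A3})$). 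Thus Theorem~\ref{t3} is \emph{not} applicable to (\ref{e2}) as written, and the paper states this explicitly at the start of its proof.

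The paper's remedy is a second approximation: one adds a viscosity term $\lambda u$ and studies
\[
dX^\epsilon_\lambda(t)=(L-\epsilon)\bigl(\Psi(t,X^\epsilon_\lambda(t),\mathcal{L}_{X^\epsilon_\lambda(t)})+\lambda X^\epsilon_\lambda(t)\bigr)dt+B(\cdots)dW(t),
\]
for which the additional $-2\lambda|u|_2^2$ supplies the missing coercivity and Theorem~\ref{t3} applies. One then needs a $\lambda$-uniform (and $\epsilon$-uniform) bound $\mathbb{E}\sup_t|X^\epsilon_\lambda(t)|_2^2\leq C_T$; the paper obtains it not by a Galerkin step but by applying $(\delta-L)^{-1/2}$ to the equation, which shifts it into the triple $F_{1,2}\subset L^2(\mu_M)\subset F_{1,2}^*$ where the standard It\^o formula is available, exploiting the sub-Markov property of $(\delta-\epsilon)(\delta-L)^{-1}$ together with \cite[Lemma~5.1]{RW1} to get the sign of the drift, and then letting $\delta\to\infty$. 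This also circumvents the domain issue in your heuristic ``$\int X^\epsilon L\Psi\,d\mu_M\leq 0$ by Beurling--Deny'': a priori $\Psi(X^\epsilon,\mu)$ lies only in $L^2(\mu_M)$, not in $\mathcal{D}(L)$ or even $F_{1,2}$, so that pairing is not defined without the resolvent regularisation. Finally one passes $\lambda\to 0$ via a Cauchy-net argument in $L^2(\Omega;C([0,T];F_{1,2}^*))$ and a monotonicity trick to identify the limiting drift.
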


\begin{proof} First, it is easy to see that the drift of approximating equation (\ref{e2}) does not satisfy the coercivity condition ({\textbf{H}}{\textbf{2}}) in Hypothesis \ref{h1}. In this case, we
consider the following  approximating equation of Eq.~(\ref{e2}) with an additional control term, i.e., for any $t\in[0,T]$ and $\lambda\in(0,1)$,
\begin{equation}\label{e3}
\left\{ \begin{aligned}
&dX^\epsilon_\lambda(t)=(L-\epsilon)\big(\Psi(t,X^\epsilon_\lambda(t),\mathcal{L}_{X^\epsilon_\lambda(t)})+\lambda X^\epsilon_\lambda(t)\big)dt+B(t,X^\epsilon_\lambda(t),\mathcal{L}_{X^\epsilon_\lambda(t)})dW(t),\\
&X^\epsilon_\lambda(0)=X(0).
\end{aligned} \right.
\end{equation}
Then it is straightforward to prove that Eq.~(\ref{e3}) satisfies the conditions ({\textbf{H}}{\textbf{1}})-({\textbf{H}}{\textbf{4}}) in Hypothesis \ref{h1} with help of the perturbation $\lambda X^\epsilon_\lambda(t)$ (see \cite{RWX} for the similar proof in the case of distribution independence).
Consequently, Theorem \ref{t3} gives that there exists a unique solution to Eq.~(\ref{e3}) fulfilling $X^\epsilon_\lambda\in L^2([0,T]\times\Omega;V)\cap L^2(\Omega;C([0,T];H))$ and $\mathbb{P}$\text{-a.s.},
\begin{equation}\label{11.1}
X^\epsilon_\lambda(t)+\int_0^t(\epsilon-L)\big(\Psi(s,X^\epsilon_\lambda(s),\mathcal{L}_{X^\epsilon_\lambda(s)})+\lambda X^\epsilon_\lambda(s)\big)ds=X(0)+\int_0^tB(s,X^\epsilon_\lambda(s),\mathcal{L}_{X^\epsilon_\lambda(s)})dW(s),
\end{equation}
since operator $1-L$ is a linear isometric from Lemma \ref{l2}, then Eq.~(\ref{11.1}) is equivalent to the following equation, $\mathbb{P}$\text{-a.s.},
\begin{equation}\label{11}
X^\epsilon_\lambda(t)+(\epsilon-L)\int_0^t\big(\Psi(s,X^\epsilon_\lambda(s),\mathcal{L}_{X^\epsilon_\lambda(s)})+\lambda X^\epsilon_\lambda(s)\big)ds=X(0)+\int_0^tB(s,X^\epsilon_\lambda(s),\mathcal{L}_{X^\epsilon_\lambda(s)})dW(s).
\end{equation}
\end{proof}
Blow we will verify that by taking $\lambda\to0$ to Eq.~(\ref{e3}), $X^\epsilon_\lambda$ will converge to a solution of Eq.~(\ref{e2}). To this end, we need the initial value $X(0)\in L^2(\Omega,\mathbb{P};V)$ to get the following uniform estimate.
\begin{lemma}\label{l3}
Under the assumptions of Theorem \ref{t1}, there is a constant $C_T$ only depending on $T$ such that
$$\mathbb{E}\big[\sup_{t\in[0,T]}|X^\epsilon_\lambda(t)|_{2}^2\big]+4\lambda\mathbb{E}\int_0^T\|X^\epsilon_\lambda(t)\|_{F_{1,2}}^2dt\leq C_T.$$
Moreover, $X^\epsilon_\lambda$ has $\mathbb{P}$\text{-a.s.}~continuous paths in $L^2(\mu_M)$.
\end{lemma}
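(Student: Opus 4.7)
The plan is to derive the estimate by applying an Itô-type formula to $|X^\epsilon_\lambda(t)|_2^2$ and then running the standard monotonicity + BDG + Gronwall pipeline. The key point is that the $\lambda L X^\epsilon_\lambda$ regularizing term in Eq.~(\ref{e3}) supplies precisely the $F_{1,2}$-coercivity accounting for the $4\lambda\mathbb{E}\int_0^T\|X^\epsilon_\lambda\|_{F_{1,2}}^2\,dt$ contribution on the left-hand side. Note that the estimate (\ref{31}) inherited from Theorem \ref{t3} controls $\|X^\epsilon_\lambda\|_{F_{1,2}^*}$ (not $|X^\epsilon_\lambda|_2$) and with constants $C_{\epsilon,\lambda}$ that blow up as $\lambda,\epsilon\downarrow 0$ (through the coercivity check in the proof of Theorem \ref{t2}), so a genuinely separate argument is needed.

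To establish the Itô identity for $|X^\epsilon_\lambda|_2^2$ rigorously I would use a spectral Galerkin approximation. Let $P_n$ be the $L^2(\mu_M)$-orthogonal projection onto the span of the first $n$ eigenfunctions of $L$ and set $X^{\epsilon,n}_\lambda:=P_nX^\epsilon_\lambda$; projecting Eq.~(\ref{11}) and exploiting that $P_n$ commutes with $L$ yields a finite-dimensional SDE for $X^{\epsilon,n}_\lambda$ valued in $\mathcal{D}(L)$, to which the finite-dimensional Itô formula applies. Using self-adjointness of $L$ and the identity $\langle Lu,u\rangle_2=|u|_2^2-\|u\|_{F_{1,2}}^2$, the $\lambda$-part of the drift yields the desired coercivity
$$2\lambda\langle (L-\epsilon)X^{\epsilon,n}_\lambda,X^{\epsilon,n}_\lambda\rangle_2=-2\lambda\|X^{\epsilon,n}_\lambda\|_{F_{1,2}}^2+2\lambda(1-\epsilon)|X^{\epsilon,n}_\lambda|_2^2.$$
For the $\Psi$-part $2\langle(L-\epsilon)P_n\Psi,X^{\epsilon,n}_\lambda\rangle_2$, the $L$-free piece is bounded via the monotonicity $({\mathbf{A}}{\mathbf{3}})$ at the reference $(v,\nu)=(0,\delta_0)$ (with $\Psi(\cdot,0,\delta_0)=0$ by $({\mathbf{A}}{\mathbf{2}})$), which controls $-2\epsilon\langle P_n\Psi,X^{\epsilon,n}_\lambda\rangle_2$ in terms of $\|X^{\epsilon,n}_\lambda\|_{F_{1,2}^*}^2$ and $\mathcal{L}_{X^\epsilon_\lambda}(\|\cdot\|_{F_{1,2}^*}^2)$ (dropping the favorable $-\alpha_1|P_n\Psi|_2^2$ term). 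The $\langle LP_n\Psi,X^{\epsilon,n}_\lambda\rangle_2$ cross-term is dispatched by Young's inequality against a fraction of the $F_{1,2}$-coercivity, combined with the Lipschitz bound $({\mathbf{A}}{\mathbf{2}})$ on $|P_n\Psi|_2$.

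Taking $\sup_{t\leq T}$ and expectations, applying the Burkholder--Davis--Gundy inequality together with the enhanced growth bound (\ref{33}) on $B$ in $L_2(U,L^2(\mu_M))$ to absorb $\tfrac{1}{2}\mathbb{E}\sup_s|X^{\epsilon,n}_\lambda|_2^2$ into the left-hand side, and using the embedding $L^2(\mu_M)\hookrightarrow F_{1,2}^*$ to bound $\mathcal{L}_{X^\epsilon_\lambda(s)}(\|\cdot\|_{F_{1,2}^*}^2)\leq C\mathbb{E}|X^\epsilon_\lambda(s)|_2^2$, Gronwall's lemma then delivers the estimate uniformly in $n$ (and in $\lambda,\epsilon$). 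Passing $n\to\infty$ by lower semicontinuity and $P_nX^\epsilon_\lambda\to X^\epsilon_\lambda$ in $L^2(\mu_M)$ produces the asserted bound with constant depending only on $T$. For the pathwise $L^2(\mu_M)$-continuity, the limiting Itô identity shows that $t\mapsto|X^\epsilon_\lambda(t)|_2^2$ is a.s.~continuous; combined with the weak continuity inherited from $X^\epsilon_\lambda\in C([0,T];F_{1,2}^*)$ via the continuous embedding, norm-plus-weak-continuity in the Hilbert space $L^2(\mu_M)$ upgrades to strong continuity. The principal obstacle will be the justification of the Galerkin limit in the $\langle LP_n\Psi,X^{\epsilon,n}_\lambda\rangle_2$ cross-term, which requires careful exploitation of the monotonicity $({\mathbf{A}}{\mathbf{3}})$ to sidestep the lack of a uniform-in-$n$ $F_{1,2}$-bound on $P_n\Psi$.
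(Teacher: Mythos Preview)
Your approach has a genuine gap in the treatment of the cross-term $\langle L P_n\Psi, X^{\epsilon,n}_\lambda\rangle_2$. You propose to dispatch it ``by Young's inequality against a fraction of the $F_{1,2}$-coercivity, combined with the Lipschitz bound $(\mathbf{A2})$ on $|P_n\Psi|_2$'', i.e.\ to bound it by $\eta\|X^{\epsilon,n}_\lambda\|_{F_{1,2}}^2 + C_\eta|P_n\Psi|_2^2$. No such inequality holds with constants independent of $n$: in eigencoordinates $Le_k=-\lambda_k e_k$, the term is $-\sum_{k\le n}\lambda_k a_k b_k$, while the two would-be controls are $\sum_{k\le n}(1+\lambda_k)b_k^2$ and $\sum_{k\le n}a_k^2$. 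Since $\lambda_k$ is unbounded, any Cauchy--Schwarz split leaves a factor $\sum\lambda_k a_k^2$ or $\sum\lambda_k^2 b_k^2$, neither of which is dominated by the available norms. Even if you absorb using the only coercivity you have, which is of size $\lambda$, the Young conjugate then carries a $1/\lambda$ factor, so the final constant cannot be uniform in $\lambda$. Separately, the spectral Galerkin step presumes $L$ has a basis of eigenfunctions, which is not assumed on the general measure space $(M,\mathcal B(M),\mu_M)$.

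The paper's proof avoids both issues with a different mechanism. It applies the resolvent smoothing $(\delta-L)^{-1/2}$ to the equation and works in the Gelfand triple $F_{1,2}\subset L^2(\mu_M)\subset F_{1,2}^*$; after this, the $\Psi$-drift contributes $2\langle\Psi(\cdot,X^\epsilon_\lambda,\mu^\epsilon_\lambda),\,PX^\epsilon_\lambda-X^\epsilon_\lambda\rangle_2$ with $P=(\delta-\epsilon)(\delta-L)^{-1}$ a symmetric contraction sub-Markovian operator. Using the kernel representation of $P$ together with the \emph{pointwise} monotonicity $(\mathbf{A1})$ (which you do not invoke at all), this term is shown to be $\le 0$ --- it has a sign, so no estimation is needed. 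One then multiplies by $\delta$, takes $\delta\to\infty$ (monotone limit of $|\sqrt{\delta}(\delta-L)^{-1/2}\cdot|_2$), and runs BDG + Gronwall. The $L^2(\mu_M)$-path continuity is obtained from Krylov's It\^o formula for $L_p$-norms rather than from a norm-plus-weak argument. The missing ingredient in your plan is precisely this sign mechanism via sub-Markovianity and $(\mathbf{A1})$; $(\mathbf{A3})$ and Young do not substitute for it here.
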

\begin{proof}
For each $\delta>\epsilon$, recalling the map $(\delta-L)^{-\frac{1}{2}}:F_{1,2}^*\to L^2(\mu_M)$ and applying it to Eq.~(\ref{11.1}) yields
\begin{eqnarray*}
(\delta-L)^{-\frac{1}{2}}X^\epsilon_\lambda(t)=\!\!\!\!\!\!\!\!&&(\delta-L)^{-\frac{1}{2}}X(0)+\int_0^t(L-\epsilon)(\delta-L)^{-\frac{1}{2}}\big(\Psi(s,X^\epsilon_\lambda(s),\mu^\epsilon_\lambda(s))+\lambda X^\epsilon_\lambda(s)\big)ds
\nonumber\\
\!\!\!\!\!\!\!\!&&+\int_0^t(\delta-L)^{-\frac{1}{2}}B(s,X^\epsilon_\lambda(s),\mu^\epsilon_\lambda(s))dW(s),
\end{eqnarray*}
here we denote $\mu^\epsilon_\lambda(s):=\mathcal{L}_{X^\epsilon_\lambda(s)}$ for convenience. It is obvious that one can consider this equation now under a new Gelfand triple $F_{1,2}\subset L^2(\mu_M)\subset F_{1,2}^*$ which will guarantee an estimate of $X^\epsilon_\lambda$ on $V$.

According to It\^{o}'s formula (cf.~\cite[Theorem 4.2.5]{LR1}), we have
\begin{eqnarray}\label{13}
\!\!\!\!\!\!\!\!&&|(\delta-L)^{-\frac{1}{2}}X^\epsilon_\lambda(t)|_2^2
\nonumber\\
=\!\!\!\!\!\!\!\!&&|(\delta-L)^{-\frac{1}{2}}X(0)|_2^2+2\int_0^t \!_{F_{1,2}^*}\langle(L-\epsilon)(\delta-L)^{-\frac{1}{2}}\Psi(s,X^\epsilon_\lambda(s),\mu^\epsilon_\lambda(s)),(\delta-L)^{-\frac{1}{2}}X^\epsilon_\lambda(s)\rangle_{F_{1,2}}ds
\nonumber\\
\!\!\!\!\!\!\!\!&&+2\lambda\int_0^t \!_{F_{1,2}^*}\langle(L-\epsilon)(\delta-L)^{-\frac{1}{2}}X^\epsilon_\lambda(s),(\delta-L)^{-\frac{1}{2}}X^\epsilon_\lambda(s)\rangle_{F_{1,2}}ds
\nonumber\\
\!\!\!\!\!\!\!\!&&+\int_0^t\|(\delta-L)^{-\frac{1}{2}}B(s,X^\epsilon_\lambda(s),\mu^\epsilon_\lambda(s))\|_{L_2(U,V)}^2ds
\nonumber\\
\!\!\!\!\!\!\!\!&&+2\int_0^t\langle(\delta-L)^{-\frac{1}{2}}B(s,X^\epsilon_\lambda(s),\mu^\epsilon_\lambda(s))dW(s),(\delta-L)^{-\frac{1}{2}}X^\epsilon_\lambda(s)\rangle_2.
\end{eqnarray}

Letting $P:=(\delta-\epsilon)(\delta-L)^{-1}$. For any $f\in L^2(\mu_M)$, it is easy to obtain that
\begin{eqnarray*}
(P-I)f=\!\!\!\!\!\!\!\!&&[(\delta-L)^{-\frac{1}{2}}(\delta-\epsilon)(\delta-L)^{-\frac{1}{2}}-(\delta-L)^{-\frac{1}{2}}(\delta-L)(\delta-L)^{-\frac{1}{2}}]f
\nonumber\\
=\!\!\!\!\!\!\!\!&&[(\delta-L)^{-\frac{1}{2}}(L-\epsilon)(\delta-L)^{-\frac{1}{2}}]f.
\end{eqnarray*}
We remark that $P$ is obviously a sub-Markovian operator since the semigroup $\{T_t\}_{t\geq 0}$ associated with $L$ is sub-Markovian. For the contraction of $P$ on $L^2(\mu_M)$,
\begin{eqnarray*}
|Pf|_2^2=|(\delta-\epsilon)(\delta-L)^{-1}f|_2^2=\frac{(\delta-\epsilon)^2}{\delta^2}|\delta(\delta-L)^{-1}f|_2^2\leq|f|_2^2 ~\text{for any}~f\in L^2(\mu_M).
\end{eqnarray*}
Consequently, $P$ is a symmetric contraction sub-Markovian operator.~Denote by $p_\delta$ the probability kernel corresponding to $P$.~The first integral on the right hand side of (\ref{13}) is equivalent to
\begin{eqnarray*}
\!\!\!\!\!\!\!\!&&2\int_0^t \!_{F_{1,2}^*}\langle(L-\epsilon)(\delta-L)^{-\frac{1}{2}}\Psi(s,X^\epsilon_\lambda(s),\mu^\epsilon_\lambda(s)),(\delta-L)^{-\frac{1}{2}}X^\epsilon_\lambda(s)\rangle_{F_{1,2}}ds
\nonumber\\
=\!\!\!\!\!\!\!\!&&2\int_0^t-{}_{F_{1,2}^*}\langle(1-L)(\delta-L)^{-\frac{1}{2}}\Psi(s,X^\epsilon_\lambda(s),\mu^\epsilon_\lambda(s)),(\delta-L)^{-\frac{1}{2}}X^\epsilon_\lambda(s)\rangle_{F_{1,2}}
\nonumber\\
\!\!\!\!\!\!\!\!&&+\langle(1-\epsilon)(\delta-L)^{-\frac{1}{2}}\Psi(s,X^\epsilon_\lambda(s),\mu^\epsilon_\lambda(s)),(\delta-L)^{-\frac{1}{2}}X^\epsilon_\lambda(s)\rangle_{2}ds
\nonumber\\
=\!\!\!\!\!\!\!\!&&2\int_0^t-\langle(1-L)^{\frac{1}{2}}(\delta-L)^{-\frac{1}{2}}\Psi(s,X^\epsilon_\lambda(s),\mu^\epsilon_\lambda(s)),(1-L)^{\frac{1}{2}}(\delta-L)^{-\frac{1}{2}}X^\epsilon_\lambda(s)\rangle_{2}
\nonumber\\
\!\!\!\!\!\!\!\!&&+\langle\Psi(s,X^\epsilon_\lambda(s),\mu^\epsilon_\lambda(s)),(\delta-L)^{-\frac{1}{2}}(1-\epsilon)(\delta-L)^{-\frac{1}{2}}X^\epsilon_\lambda(s)\rangle_{2}ds
\nonumber\\
=\!\!\!\!\!\!\!\!&&2\int_0^t\langle\Psi(s,X^\epsilon_\lambda(s),\mu^\epsilon_\lambda(s)),PX^\epsilon_\lambda(s)-X^\epsilon_\lambda(s)\rangle_2ds.
\end{eqnarray*}
Then taking \cite[Lemma 5.1]{RW1} and ({\textbf{A}}{\textbf{1}}) into account, it implies that
\begin{eqnarray*}
\!\!\!\!\!\!\!\!&&2\int_0^t\langle\Psi(s,X^\epsilon_\lambda(s),\mu^\epsilon_\lambda(s)),PX^\epsilon_\lambda(s)-X^\epsilon_\lambda(s)\rangle_2ds
\nonumber\\
=\!\!\!\!\!\!\!\!&&-\int_0^t\Big\{\int_M\int_M\Big[\Psi(s,X^\epsilon_\lambda(s)(\xi),\mu^\epsilon_\lambda(s))-\Psi(s,X^\epsilon_\lambda(s)(\tilde{\xi}),\mu^\epsilon_\lambda(s))\Big]
\Big[X^\epsilon_\lambda(s)(\xi)-X^\epsilon_\lambda(s)(\tilde{\xi})\Big]
\nonumber\\
\!\!\!\!\!\!\!\!&&\cdot p_\delta(\xi,\tilde{\xi})\mu_M(d\tilde{\xi})\mu_M(d\xi)\Big\}ds-2\int_0^t\Big\{\int_M(1-P1)\Big[X^\epsilon_\lambda(s)\Psi(s,X^\epsilon_\lambda(s),\mu^\epsilon_\lambda(s))\Big]d\mu_M\Big\}ds
\nonumber\\
\leq\!\!\!\!\!\!\!\!&&0.
\end{eqnarray*}
Meanwhile the second integral on the right hand side of (\ref{13}),
\begin{eqnarray*}
\!\!\!\!\!\!\!\!&&2\lambda\int_0^t \!_{F_{1,2}^*}\langle(L-\epsilon)(\delta-L)^{-\frac{1}{2}}X^\epsilon_\lambda(s),(\delta-L)^{-\frac{1}{2}}X^\epsilon_\lambda(s)\rangle_{F_{1,2}}ds
\nonumber\\
=\!\!\!\!\!\!\!\!&&-2\lambda\int_0^t {}_{F_{1,2}^*}\langle(1-L)(\delta-L)^{-\frac{1}{2}}X^\epsilon_\lambda(s),(\delta-L)^{-\frac{1}{2}}X^\epsilon_\lambda(s)\rangle_{F_{1,2}}ds
\nonumber\\
\!\!\!\!\!\!\!\!&&+2\lambda\int_0^t \langle(1-\epsilon)(\delta-L)^{-\frac{1}{2}}X^\epsilon_\lambda(s),(\delta-L)^{-\frac{1}{2}}X^\epsilon_\lambda(s)\rangle_{2}ds
\nonumber\\
\leq\!\!\!\!\!\!\!\!&&-2\lambda\int_0^t \|(\delta-L)^{-\frac{1}{2}}X^\epsilon_\lambda(s)\|_{F_{1,2}}^2ds+2\int_0^t |(\delta-L)^{-\frac{1}{2}}X^\epsilon_\lambda(s)|_2^2ds.
\end{eqnarray*}
Hence, multiplying $\delta$ for both sides of (\ref{13}) yields that
\begin{eqnarray*}
\!\!\!\!\!\!\!\!&&|\sqrt{\delta}(\delta-L)^{-\frac{1}{2}}X^\epsilon_\lambda(t)|_2^2+2\lambda\int_0^t \|\sqrt{\delta}(\delta-L)^{-\frac{1}{2}}X^\epsilon_\lambda(s)\|_{F_{1,2}}^2ds
\nonumber\\
\leq\!\!\!\!\!\!\!\!&&|\sqrt{\delta}(\delta-L)^{-\frac{1}{2}}X(0)|_2^2+2\int_0^t |\sqrt{\delta}(\delta-L)^{-\frac{1}{2}}X^\epsilon_\lambda(s)|_2^2ds
\nonumber\\
\!\!\!\!\!\!\!\!&&+\int_0^t\|\sqrt{\delta}(\delta-L)^{-\frac{1}{2}}B(s,X^\epsilon_\lambda(s),\mu^\epsilon_\lambda(s))\|_{L_2(U,V)}^2ds
\nonumber\\
\!\!\!\!\!\!\!\!&&+2\int_0^t\langle\sqrt{\delta}(\delta-L)^{-\frac{1}{2}}B(s,X^\epsilon_\lambda(s),\mu^\epsilon_\lambda(s))dW(s),\sqrt{\delta}(\delta-L)^{-\frac{1}{2}}X^\epsilon_\lambda(s)\rangle_2.
\end{eqnarray*}
By Burkholder-Davis-Gundy's inequality, it follows that
\begin{eqnarray}\label{32}
\!\!\!\!\!\!\!\!&&\mathbb{E}\left[\sup_{s\in[0,t]}|\sqrt{\delta}(\delta-L)^{-\frac{1}{2}}X^\epsilon_\lambda(s)|_2^2\right]+2\lambda\mathbb{E}\int_0^t \|\sqrt{\delta}(\delta-L)^{-\frac{1}{2}}X^\epsilon_\lambda(s)\|_{F_{1,2}}^2ds
\nonumber\\
\leq\!\!\!\!\!\!\!\!&&\mathbb{E}|\sqrt{\delta}(\delta-L)^{-\frac{1}{2}}X(0)|_2^2+2\mathbb{E}\int_0^t |X^\epsilon_\lambda(s)|_2^2ds+\mathbb{E}\int_0^t\|B(s,X^\epsilon_\lambda(s),\mu^\epsilon_\lambda(s))\|_{L_2(U,V)}^2ds
\nonumber\\
\!\!\!\!\!\!\!\!&&+8\mathbb{E}\Big(\int_0^t\|\sqrt{\delta}(\delta-L)^{-\frac{1}{2}}B(s,X^\epsilon_\lambda(s),\mu^\epsilon_\lambda(s))\|_{L_2(U,V)}^2|\sqrt{\delta}(\delta-L)^{-\frac{1}{2}}X^\epsilon_\lambda(s)|_2^2ds\Big)^{\frac{1}{2}}
\nonumber\\
\leq\!\!\!\!\!\!\!\!&&\mathbb{E}|\sqrt{\delta}(\delta-L)^{-\frac{1}{2}}X(0)|_2^2+\frac{1}{2}\mathbb{E}\left[\sup_{s\in[0,t]}|\sqrt{\delta}(\delta-L)^{-\frac{1}{2}}X^\epsilon_\lambda(s)|_2^2\right]
\nonumber\\
\!\!\!\!\!\!\!\!&&+C_1\mathbb{E}\int_0^t\Big[|X^\epsilon_\lambda(s)|_{2}^2+\mu^\epsilon_\lambda(s)(\|\cdot\|_{F_{1,2}^*}^2)\Big]ds+KT,
\end{eqnarray}
where we used the contraction of $\sqrt{\delta}(\delta-L)^{-\frac{1}{2}}$ on $L^2(\mu_M)$ and ({\textbf{A}}{\textbf{4}}), and the constants $C_1,K>0$ are independent of $\epsilon$ and $\lambda$. And we would like to remark that since $|\sqrt{\delta}(\delta-L)^{-\frac{1}{2}}\cdot|_2$ is equivalent to $\|\cdot\|_{F_{1,2}^*}$, the second term of the right hand side of (\ref{32}) is finite by $X^\epsilon_\lambda\in L^2(\Omega;C([0,T];H))$. Rearranging this inequality leads to
\begin{eqnarray}\label{14}
\!\!\!\!\!\!\!\!&&\mathbb{E}\left[\sup_{s\in[0,t]}|\sqrt{\delta}(\delta-L)^{-\frac{1}{2}}X^\epsilon_\lambda(s)|_2^2\right]+4\lambda\mathbb{E}\int_0^t |(1-L)^{\frac{1}{2}}\sqrt{\delta}(\delta-L)^{-\frac{1}{2}}X^\epsilon_\lambda(s)|_{2}^2ds
\nonumber\\
\leq\!\!\!\!\!\!\!\!&&2\mathbb{E}|X(0)|_2^2+2KT+C_2\int_0^t\mathbb{E}\left[\sup_{r\in[0,s]}|X^\epsilon_\lambda(r)|_2^2\right]dr,
\end{eqnarray}
where we used the embedding $V\subset H$ is continuous, and the constant $C_2$ is independent of $\epsilon$ and $\lambda$. Furthermore, the left hand side of (\ref{14}) is an increasing function under $\sup_{s\in[0,t]}$~w.r.t.~$\delta$ so that taking $\delta\to\infty$ yields that
\begin{eqnarray*}
\!\!\!\!\!\!\!\!&&\mathbb{E}\left[\sup_{s\in[0,t]}|X^\epsilon_\lambda(s)|_2^2\right]+4\lambda\mathbb{E}\int_0^t \|X^\epsilon_\lambda(s)\|_{F_{1,2}}^2ds
\nonumber\\
\leq\!\!\!\!\!\!\!\!&&2\mathbb{E}|X(0)|_2^2+2KT+C_2\int_0^t\mathbb{E}\left[\sup_{r\in[0,s]}|X^\epsilon_\lambda(r)|_2^2\right]dr.
\end{eqnarray*}
By Gronwall's lemma  we have
\begin{equation*}
\mathbb{E}\left[\sup_{s\in[0,t]}|X^\epsilon_\lambda(s)|_2^2\right]+4\lambda\mathbb{E}\int_0^t \|X^\epsilon_\lambda(s)\|_{F_{1,2}}^2ds\leq(2\mathbb{E}|X(0)|_2^2+2KT)e^{C_2T},~~t\in[0,T].
\end{equation*}
The continuity of $X^\epsilon_\lambda$ on $L^2(\mu_M)$ is a direct consequence of \cite[Theorem 2.1]{K1}.
\end{proof}

\textbf{Continuation of Proof of  Theorem \ref{t2}}

\textbf{(Existence)} Now let us complete the proof of Theorem \ref{t2} by verifying $\{X^\epsilon_\lambda\}_{\lambda\in(0,1)}$ weakly converges to $X^\epsilon$ in $L^2(\Omega\times[0,T];L^2(\mu_M))$ as $\lambda\to 0$.

Firstly, by making use of It\^{o}'s formula, for any $\lambda,\tilde{\lambda}\in(0,1)$ and $t\in[0,T]$,
\begin{eqnarray}\label{36}
\!\!\!\!\!\!\!\!&&\|X^\epsilon_\lambda(t)-X^\epsilon_{\tilde{\lambda}}(t)\|_{F_{1,2}^*}^2
\nonumber\\
\!\!\!\!\!\!\!\!&&+2\int_0^t\langle\Psi(s,X^\epsilon_\lambda(s),\mu^\epsilon_\lambda(s))-\Psi(s,X^\epsilon_{\tilde{\lambda}}(s),\mu^\epsilon_{\tilde{\lambda}}(s))+\lambda X^\epsilon_\lambda(s)-\tilde{\lambda}X^\epsilon_{\tilde{\lambda}}(s), X^\epsilon_\lambda(s)-X^\epsilon_{\tilde{\lambda}}(s)\rangle_2ds
\nonumber\\
=\!\!\!\!\!\!\!\!&&2\int_0^t(1-\epsilon)\langle\Psi(s,X^\epsilon_\lambda(s),\mu^\epsilon_\lambda(s))-\Psi(s,X^\epsilon_{\tilde{\lambda}}(s),\mu^\epsilon_{\tilde{\lambda}}(s))+\lambda X^\epsilon_\lambda(s)-\tilde{\lambda}X^\epsilon_{\tilde{\lambda}}(s), X^\epsilon_\lambda(s)-X^\epsilon_{\tilde{\lambda}}(s)\rangle_{F_{1,2}^*}ds
\nonumber\\
\!\!\!\!\!\!\!\!&&+\int_0^t\|B(s,X^\epsilon_\lambda(s),\mu^\epsilon_\lambda(s))-B(s,X^\epsilon_{\tilde{\lambda}}(s),\mu^\epsilon_{\tilde{\lambda}}(s))\|_{L_2(U,H)}^2ds
\nonumber\\
\!\!\!\!\!\!\!\!&&+2\int_0^t\langle \big(B (s,X^\epsilon_\lambda(s),\mu^\epsilon_\lambda(s))-B(s,X^\epsilon_{\tilde{\lambda}}(s),\mu^\epsilon_{\tilde{\lambda}}(s))\big)dW(s),X^\epsilon_\lambda(s)-X^\epsilon_{\tilde{\lambda}}(s)\rangle_{F_{1,2}^*},
\end{eqnarray}
here we denote $\mu^\epsilon_{\tilde{\lambda}}(s):=\mathcal{L}_{X^\epsilon_{\tilde{\lambda}}(s)}$.

From ({\textbf{A}}{\textbf{3}}) we obtain
\begin{eqnarray*}
\!\!\!\!\!\!\!\!&&2\int_0^t\langle\Psi(s,X^\epsilon_\lambda(s),\mu^\epsilon_\lambda(s))-\Psi(s,X^\epsilon_{\tilde{\lambda}}(s),\mu^\epsilon_{\tilde{\lambda}}(s))+\lambda X^\epsilon_\lambda(s)-\tilde{\lambda}X^\epsilon_{\tilde{\lambda}}(s), X^\epsilon_\lambda(s)-X^\epsilon_{\tilde{\lambda}}(s)\rangle_2ds
\nonumber\\
\geq\!\!\!\!\!\!\!\!&&\alpha_1\int_0^t|\Psi(s,X^\epsilon_\lambda(s),\mu^\epsilon_\lambda(s))-\Psi(s,X^\epsilon_{\tilde{\lambda}}(s),\mu^\epsilon_{\tilde{\lambda}}(s))|_2^2ds-\alpha_2\int_0^t\mathbb{W}_{2,H}
(\mu^\epsilon_\lambda(s),\mu^\epsilon_{\tilde{\lambda}}(s))^2ds
\nonumber\\
\!\!\!\!\!\!\!\!&&-\alpha_3\int_0^t\|X^\epsilon_\lambda(s)-X^\epsilon_{\tilde{\lambda}}(s)\|_{F_{1,2}^*}^2ds+2\int_0^t\langle\lambda X^\epsilon_\lambda(s)-\tilde{\lambda}X^\epsilon_{\tilde{\lambda}}(s), X^\epsilon_\lambda(s)-X^\epsilon_{\tilde{\lambda}}(s)\rangle_2ds.
\end{eqnarray*}
The first integral of the right hand side of (\ref{36}) is controlled by
\begin{eqnarray*}
\!\!\!\!\!\!\!\!&&2\int_0^t(1-\epsilon)\langle\Psi(s,X^\epsilon_\lambda(s),\mu^\epsilon_\lambda(s))-\Psi(s,X^\epsilon_{\tilde{\lambda}}(s),\mu^\epsilon_{\tilde{\lambda}}(s))+\lambda X^\epsilon_\lambda(s)-\tilde{\lambda}X^\epsilon_{\tilde{\lambda}}(s), X^\epsilon_\lambda(s)-X^\epsilon_{\tilde{\lambda}}(s)\rangle_{F_{1,2}^*}ds
\nonumber\\
\leq\!\!\!\!\!\!\!\!&&\varepsilon_0\int_0^t|\Psi(s,X^\epsilon_\lambda(s),\mu^\epsilon_\lambda(s))-\Psi(s,X^\epsilon_{\tilde{\lambda}}(s),\mu^\epsilon_{\tilde{\lambda}}(s))|_2^2ds
+C_{\varepsilon_0}\int_0^t\|X^\epsilon_\lambda(s)-X^\epsilon_{\tilde{\lambda}}(s)\|_{F_{1,2}^*}^2ds
\nonumber\\
\!\!\!\!\!\!\!\!&&+C_1(\lambda+\tilde{\lambda})\int_0^t(|X^\epsilon_\lambda(s)|_2^2+|X^\epsilon_{\tilde{\lambda}}(s)|_2^2)ds,
\end{eqnarray*}
where we used Young's inequality and take $\varepsilon_0<\alpha_1$, the constants $C_{\varepsilon_0},C_1>0$ are independent of $\epsilon$ and $\lambda$.

The Burkholder-Davis-Gundy's inequality gives that
\begin{eqnarray*}
\!\!\!\!\!\!\!\!&&\mathbb{E}\left[\sup_{s\in[0,t]}\|X^\epsilon_\lambda(s)-X^\epsilon_{\tilde{\lambda}}(s)\|_{F_{1,2}^*}^2\right]
+(\alpha_1-\varepsilon_0)\mathbb{E}\int_0^t|\Psi(s,X^\epsilon_\lambda(s),\mu^\epsilon_\lambda(s))-\Psi(s,X^\epsilon_{\tilde{\lambda}}(s),\mu^\epsilon_{\tilde{\lambda}}(s))|_2^2ds
\nonumber\\
\leq\!\!\!\!\!\!\!\!&&C_2\int_0^t\mathbb{E}\|X^\epsilon_\lambda(s)-X^\epsilon_{\tilde{\lambda}}(s)\|_{F_{1,2}^*}^2ds+C_3(\lambda+\tilde{\lambda})\int_0^t(|X^\epsilon_\lambda(s)|_2^2+|X^\epsilon_{\tilde{\lambda}}(s)|_2^2)ds
\nonumber\\
\!\!\!\!\!\!\!\!&&+8\mathbb{E}\Big(\int_0^t\|B (s,X^\epsilon_\lambda(s),\mu^\epsilon_\lambda(s))-B(s,X^\epsilon_{\tilde{\lambda}}(s),\mu^\epsilon_{\tilde{\lambda}}(s))\|_{L_2(U,H)}^2\|X^\epsilon_\lambda(s)-X^\epsilon_{\tilde{\lambda}}(s)\|_{F_{1,2}^*}^2ds\Big)^{\frac{1}{2}}
\nonumber\\
\leq\!\!\!\!\!\!\!\!&&\frac{1}{2}\mathbb{E}\left[\sup_{s\in[0,t]}\|X^\epsilon_\lambda(s)-X^\epsilon_{\tilde{\lambda}}(s)\|_{F_{1,2}^*}^2\right]+C_4\int_0^t\mathbb{E}\left[\sup_{r\in[0,s]}\|X^\epsilon_\lambda(r)-X^\epsilon_{\tilde{\lambda}}(r)\|_{F_{1,2}^*}^2\right]ds
\nonumber\\
\!\!\!\!\!\!\!\!&&+C_3(\lambda+\tilde{\lambda})\mathbb{E}\int_0^t(|X^\epsilon_\lambda(s)|_2^2+|X^\epsilon_{\tilde{\lambda}}(s)|_2^2)ds.
\end{eqnarray*}
where the constants $C_2,C_3,C_4>0$ are independent of $\epsilon$ and $\lambda$. Then Lemma \ref{l3} and Gronwall's lemma imply that there is a constant $C$ independent of $\epsilon$ and $\lambda$,
\begin{eqnarray}\label{15}
\!\!\!\!\!\!\!\!&&\mathbb{E}\left[\sup_{s\in[0,t]}\|X^\epsilon_\lambda(s)-X^\epsilon_{\tilde{\lambda}}(s)\|_{F_{1,2}^*}^2\right]
+2(\alpha_1-\varepsilon_0)\mathbb{E}\int_0^t|\Psi(s,X^\epsilon_\lambda(s),\mu^\epsilon_\lambda(s))-\Psi(s,X^\epsilon_{\tilde{\lambda}}(s),\mu^\epsilon_{\tilde{\lambda}}(s))|_2^2ds
\nonumber\\
\leq\!\!\!\!\!\!\!\!&&C(\lambda+\tilde{\lambda}).
\end{eqnarray}

Hence $\{X^\epsilon_\lambda\}$ is a Cauchy net in $L^2(\Omega;C([0,T];F_{1,2}^*))$ with respect to $\lambda$. Taking $\lambda\to 0$, (\ref{15}) gives that there is a continuous $(\mathcal{F}_t)_{t\geq 0}$-adapted $F_{1,2}^*$-valued process $\{X^\epsilon(t)\}_{t\in[0,T]}$ such that $X^\epsilon_\lambda\to X^\epsilon$ strongly in $L^2(\Omega;C([0,T];F_{1,2}^*))$. Furthermore, thanks to the Banach-Steinhaus theorem, Lemma \ref{l3} and $F_{1,2}^*\subset(L^2(\mu_M))^*$ densely imply that $X^\epsilon_\lambda\to X^\epsilon$ weakly in $L^2(\Omega\times[0,T];L^2(\mu_M))$ as $\lambda\to 0$. And for the distribution we infer that
$$\mathbb{W}_{2,H}(\mathcal{L}_{X^\epsilon_{\lambda}(t)},\mathcal{L}_{X^\epsilon(t)})^2\leq\mathbb{E}\|X^\epsilon_\lambda(t)-X^\epsilon(t)\|_{F_{1,2}^*}^2\downarrow 0~\text{as}~\lambda\downarrow 0.$$
This together with Burkholder-Davis-Gundy's inequality yields that
$\int_0^{\cdot}B(s,X^\epsilon_\lambda(s),\mathcal{L}_{X^\epsilon_{\lambda}(s)})dW(s)$ also converges strongly to $\int_0^{\cdot}B(s,X^\epsilon(s),\mathcal{L}_{X^\epsilon(s)})dW(s)$ in $L^2(\Omega;C([0,T];F_{1,2}^*))$ as $\lambda\to 0$. Moreover, Eq.~(\ref{11}) and Lemma \ref{l1} imply that
$$\int_0^\cdot\Psi(s,X^\epsilon_\lambda(s),\mathcal{L}_{X^\epsilon_{\lambda}(s)})+\lambda X^\epsilon_\lambda(s)ds$$
converges strongly to an element in $L^2(\Omega;C([0,T];F_{1,2}))$. Due to Lemma \ref{l3} and (\ref{15}), it is obvious that $\{\Psi(\cdot,X^\epsilon_\lambda(\cdot),\mathcal{L}_{X^\epsilon_{\lambda}(\cdot)})+\lambda X^\epsilon_\lambda(\cdot)\}_{\lambda\in(0,1)}$  strongly converges to an element denoted by $Y(\cdot)$ in $L^2(\Omega\times[0,T];L^2(\mu_M))$. However, it is not easy to check  $\Psi(\cdot,X^\epsilon_\lambda(\cdot),\mathcal{L}_{X^\epsilon_{\lambda}(\cdot)})+\lambda X^\epsilon_\lambda(\cdot)$ strongly converges to $\Psi(\cdot,X^\epsilon(\cdot),\mathcal{L}_{X^\epsilon(\cdot)})$ in $L^2(\Omega\times[0,T];L^2(\mu_M))$ directly, therefore, we consider the weak limit instead.

Recalling for any $t\in[0,T]$, the equation
\begin{equation*}
X^\epsilon_\lambda(t)+(\epsilon-L)\int_0^t\big(\Psi(s,X^\epsilon_\lambda(s),\mathcal{L}_{X^\epsilon_\lambda(s)})+\lambda X^\epsilon_\lambda(s)\big)ds=X(0)+\int_0^tB(s,X^\epsilon_\lambda(s),\mathcal{L}_{X^\epsilon_\lambda(s)})dW(s).
\end{equation*}
Following the convergence arguments above, taking $\lambda\to 0$ and by Lemma \ref{l2}, it is obvious that for any $t\in[0,T]$,
\begin{equation*}
X^\epsilon(t)+\int_0^t (\epsilon-L) Y(s)ds=X(0)+\int_0^tB(s,X^\epsilon(s),\mathcal{L}_{X^\epsilon(s)})dW(s)~\text{holds in}~ (L^2(\mu_M))^*.
\end{equation*}

We now aim to prove $Y(\cdot)=\Psi(X^\epsilon(\cdot),\mathcal{L}_{X^\epsilon(\cdot)})$, $dt\times\mathbb{P}$-a.s.
Using It\^{o}'s formula and the product rule, we have for any $c\geq 0$ that
\begin{eqnarray}\label{16}
\!\!\!\!\!\!\!\!&&\mathbb{E}\big[e^{-ct}\|X^\epsilon(t)\|_{F_{1,2}^*}^2\big]-\mathbb{E}\|X(0)\|_{F_{1,2}^*}^2
\nonumber\\
=\!\!\!\!\!\!\!\!&&\mathbb{E}\int_0^te^{-cs}\big(2~_{V^*}\langle(L-\epsilon)Y(s), X^\epsilon(s)\rangle_V+\|B(s,X^\epsilon(s),\mu^\epsilon(s))\|_{L_2(U,H)}^2-c\|X^\epsilon(s)\|_{F_{1,2}^*}^2\big)ds,
\nonumber\\
\end{eqnarray}
where we remain use $\mu^\epsilon(s)=\mathcal{L}_{X^\epsilon(s)}$ for simplicity.

For any $\phi\in L^2([0,T]\times\Omega;L^2(\mu_M))\cap L^2(\Omega;C([0,T];F_{1,2}^*))$, using It\^{o}'s formula yields that
\begin{eqnarray}\label{17}
\!\!\!\!\!\!\!\!&&\mathbb{E}\big[e^{-ct}\|X^\epsilon_\lambda(t)\|_{F_{1,2}^*}^2\big]-\mathbb{E}\|X(0)\|_{F_{1,2}^*}^2
\nonumber\\
\leq\!\!\!\!\!\!\!\!&&\mathbb{E}\Big\{\int_0^te^{-cs}\Big[\|B(s,X^\epsilon_\lambda(s),\mu^\epsilon_\lambda(s))-B(s,\phi(s),\mu_\phi(s))\|_{L_2(U,H)}^2-c\|X^\epsilon_\lambda(s)-\phi(s)\|_{F_{1,2}^*}^2
\nonumber\\
\!\!\!\!\!\!\!\!&&+2~_{V^*}\langle(L-\epsilon)\big[\big(\Psi(s,X^\epsilon_\lambda(s),\mu^\epsilon_\lambda(s))+\lambda X^\epsilon_\lambda(s)\big)
\nonumber\\
\!\!\!\!\!\!\!\!&&
-\big(\Psi(s,\phi(s),\mu_\phi(s))+\lambda \phi(s)\big)\big],X^\epsilon_\lambda(s)- \phi(s)\rangle_V\Big]ds\Big\}
\nonumber\\
\!\!\!\!\!\!\!\!&&+\mathbb{E}\Big\{\int_0^te^{-cs}\Big[2~_{V^*}\langle(L-\epsilon)\big[\Psi(s,\phi(s),\mu_\phi(s))+\lambda\phi(s)\big],X^\epsilon_\lambda(s)\rangle_V-2c\langle X^\epsilon_\lambda(s),\phi(s)\rangle_{F_{1,2}^*}
\nonumber\\
\!\!\!\!\!\!\!\!&&+c\|\phi(s)\|_{F_{1,2,\epsilon}^*}^2+2~_{V^*}\langle(L-\epsilon)\big[\big(\Psi(s,X^\epsilon_\lambda(s),\mu^\epsilon_\lambda(s))+\lambda X^\epsilon_\lambda(s)\big)
\nonumber\\
\!\!\!\!\!\!\!\!&&
-\big(\Psi(s,\phi(s),\mu_\phi(s))+\lambda \phi(s)\big)\big],\phi(s)\rangle_{V}
\nonumber\\
\!\!\!\!\!\!\!\!&&+2\langle B (s,X^\epsilon_\lambda(s),\mu^\epsilon_\lambda(s)),B(s,\phi(s),\mu_\phi(s))\rangle_{L_2(U,H)}-\|B(s,\phi(s),\mu_\phi(s))\|_{L_2(U,H)}^2\Big]ds\Big\},~~
\end{eqnarray}
where we denote $\mu_{\phi}(s):=\mathcal{L}_{\phi(s)}$. The first integral of the right hand side of (\ref{17}) can be controlled by using (\ref{es1}) and taking $c=\alpha_2+\alpha_3+2K_1+C_{\epsilon,\lambda}$ that
\begin{eqnarray}\label{18}
\!\!\!\!\!\!\!\!&&\mathbb{E}\Big\{\int_0^te^{-cs}\Big[\|B(s,X^\epsilon_\lambda(s),\mu^\epsilon_\lambda(s))-B(s,\phi(s),\mu_\phi(s))\|_{L_2(U,H)}^2-c\|X^\epsilon_\lambda(s)-\phi(s)\|_{F_{1,2}^*}^2
\nonumber\\
\!\!\!\!\!\!\!\!&&+2~_{V^*}\langle(L-\epsilon)\big[\big(\Psi(s,X^\epsilon_\lambda(s),\mu^\epsilon_\lambda(s))+\lambda X^\epsilon_\lambda(s)\big)
\nonumber\\
\!\!\!\!\!\!\!\!&&
-\big(\Psi(s,\phi(s),\mu_\phi(s))+\lambda \phi(s)\big)\big],X^\epsilon_\lambda(s)- \phi(s)\rangle_V\Big]ds\Big\}
\nonumber\\
\leq\!\!\!\!\!\!\!\!&&\mathbb{E}\Big\{\int_0^te^{-cs}\Big[
(\alpha_2+K_1)\mathbb{W}_{2,H}(\mu^\epsilon_\lambda(s),\mu_\phi(s))^2+(C_{\epsilon,\lambda}+\alpha_3+K_1)\|X^\epsilon_\lambda(s)-\phi(s)\|_{F_{1,2}^*}^2
\nonumber\\
\!\!\!\!\!\!\!\!&&-c\|X^\epsilon_\lambda(s)-\phi(s)\|_{F_{1,2}^*}^2\Big]ds\Big\}
\nonumber\\
\leq\!\!\!\!\!\!\!\!&&\mathbb{E}\Big\{\int_0^te^{-cs}\Big[
(\alpha_2+\alpha_3+2K_1+C_{\epsilon,\lambda})\|X^\epsilon_\lambda(s)-\phi(s)\|_{F_{1,2}^*}^2-c\|X^\epsilon_\lambda(s)-\phi(s)\|_{F_{1,2}^*}^2\Big]ds\Big\}
\nonumber\\
=\!\!\!\!\!\!\!\!&&0.
\end{eqnarray}
Combining (\ref{17}) with (\ref{18}), for any non-negative $\varphi\in L^\infty([0,T],dt;\mathbb{R})$,
\begin{eqnarray}\label{19}
\!\!\!\!\!\!\!\!&&\mathbb{E}\Big[\int_0^T\varphi(t)\Big(e^{-ct}\|X^\epsilon(t)\|_{F_{1,2}^*}^2-\|X(0)\|_{F_{1,2}^*}^2\Big)dt\Big]
\nonumber\\
\leq\!\!\!\!\!\!\!\!&&\liminf_{\lambda\to 0}\mathbb{E}\Big[\int_0^T\varphi(t)\Big(e^{-ct}\|X^\epsilon_\lambda(t)\|_{F_{1,2}^*}^2-\|X(0)\|_{F_{1,2}^*}^2\Big)dt\Big]
\nonumber\\
\leq\!\!\!\!\!\!\!\!&&\mathbb{E}\Big\{\int_0^T\varphi(t)\Big[\int_0^te^{-cs}\Big(2~_{V^*}\langle(L-\epsilon)\Psi(s,\phi(s),\mu_\phi(s)),X^\epsilon(s)\rangle_V-2c\langle X^\epsilon(s),\phi(s)\rangle_{F_{1,2}^*}
\nonumber\\
\!\!\!\!\!\!\!\!&&
+c\|\phi(s)\|_{F_{1,2}^*}^2+2~_{V^*}\langle(L-\epsilon)\big[Y(s) -\Psi(s,\phi(s),\mu_\phi(s))\big],\phi(s)\rangle_{V}
\nonumber\\
\!\!\!\!\!\!\!\!&&+2\langle B (s,X^\epsilon(s),\mu^\epsilon(s)),B(s,\phi(s),\mu_\phi(s))\rangle_{L_2(U,H)}-\|B(s,\phi(s),\mu_\phi(s))\|_{L_2(U,H)}^2\Big)ds\Big]dt\Big\}.
\nonumber\\
\end{eqnarray}
Taking (\ref{16}) into the left hand side of (\ref{19}) and then rearranging (\ref{19}), it leads to
\begin{eqnarray}\label{20}
\!\!\!\!\!\!\!\!&&\mathbb{E}\Big\{\int_0^T\varphi(t)\Big[\int_0^te^{-cs}\Big(2~_{V^*}\langle(L-\epsilon)Y(s)-(L-\epsilon)\Psi(s,\phi(s),\mu_\phi(s)),X^\epsilon(s)-\phi(s)\rangle_V
\nonumber\\
\!\!\!\!\!\!\!\!&&-c\|X^\epsilon(s)-\phi(s)\|_{F_{1,2}^*}^2\Big)ds\Big]dt\Big\}\leq 0.
\end{eqnarray}
Letting $\phi=X^\epsilon-\eta\tilde{\phi}v$ for any $\eta>0$, $v\in L^2(\mu_M)$ and $\tilde{\phi}\in L^\infty([0,T]\times\Omega,dt\times\mathbb{P};\mathbb{R})$. Splitting both sides of (\ref{20}) by $\eta$, and it follows that
$$\mathbb{W}_{2,H}(\mu^\epsilon(s),\mu_\phi(s))^2\leq\mathbb{E}\|\eta\tilde{\phi}(s)v\|_{F_{1,2}^*}^2\leq\eta\|\tilde{\phi}\|_{\infty}^2\|v\|_{F_{1,2}^*}^2\downarrow0,~\text{as}~\eta\downarrow0.$$
Then taking $\eta\to 0$, by Lebesgue's dominated convergence theorem, we get
\begin{eqnarray*}
\mathbb{E}\Big\{\int_0^T\varphi(t)\Big[\int_0^te^{-cs}\Big(2~_{V^*}\langle(L-\epsilon)Y(s)-(L-\epsilon)\Psi(s,X^\epsilon(s),\mu^\epsilon(s)),\tilde{\phi}(s)v\rangle_V
ds\Big]dt\Big\}\leq 0.
\end{eqnarray*}
Replacing $\tilde{\phi}$ with $-\tilde{\phi}$, it follows that
\begin{eqnarray*}
\mathbb{E}\Big\{\int_0^T\varphi(t)\Big[\int_0^te^{-cs}\Big(2~_{V^*}\langle(L-\epsilon)Y(s)-(L-\epsilon)\Psi(s,X^\epsilon(s),\mu^\epsilon(s)),\tilde{\phi}(s)v\rangle_V
ds\Big]dt\Big\}=0.
\end{eqnarray*}
Since $\varphi,\tilde{\phi},v$ are arbitrary, we finally conclude that $Y(\cdot)=\Psi(\cdot,X^\epsilon(\cdot),\mu^\epsilon(\cdot))$, $dt\times\mathbb{P}$-a.s., which combines with (\ref{15}) that
$$\int_0^\cdot\Psi(s,X^\epsilon_\lambda(s),\mu^\epsilon_{\lambda}(s))+\lambda X^\epsilon_\lambda(s)ds\to\int_0^\cdot\Psi(s,X^\epsilon(s),\mu^\epsilon(s))ds~\text{strongly in}~L^2(\Omega\times[0,T];L^2(\mu_M)).$$
Consequently,
$$\int_0^\cdot\Psi(s,X^\epsilon_\lambda(s),\mu^\epsilon_{\lambda}(s))+\lambda X^\epsilon_\lambda(s)ds\to\int_0^\cdot\Psi(s,X^\epsilon(s),\mu^\epsilon(s))ds~\text{strongly in}~L^2(\Omega;C([0,T];F_{1,2})),$$
which yields that $\int_0^\cdot\Psi(s,X^\epsilon(s),\mu^\epsilon(s))ds\in C([0,T];F_{1,2})$, $dt\times\mathbb{P}$-a.s.

Furthermore, (\ref{6}) is a consequent result following from Lemma \ref{l3} and lower semi-continuity.

\textbf{(Uniqueness)} Let $X^\epsilon,Y^\epsilon$ be two solutions to Eq.~(\ref{e3}) with $X^\epsilon(0)=X(0)\in L^2(\Omega,\mathbb{P};H)$ and $Y^\epsilon(0)=Y(0)\in L^2(\Omega,\mathbb{P};H)$, then $\mathbb{P}$-a.s.
\begin{eqnarray*}
\!\!\!\!\!\!\!\!&&X^\epsilon(t)-Y^\epsilon(t)+(\epsilon-L)\int_0^t\big(\Psi(s,X^\epsilon(s),\mathcal{L}_{X^\epsilon(s)})-\Psi(s,Y^\epsilon(s),\mathcal{L}_{Y^\epsilon(s)})\big)ds
\nonumber\\
=\!\!\!\!\!\!\!\!&&
(X(0)-Y(0))+\int_0^t\big(B(s,X^\epsilon(s),\mathcal{L}_{X^\epsilon(s)})-B(s,Y^\epsilon(s),\mathcal{L}_{Y^\epsilon(s)})\big)dW(s),~t\in[0,T].
\end{eqnarray*}
Using It\^{o}'s formula to $\|X^\epsilon(t)-Y^\epsilon(t)\|_{F_{1,2}^*}^2$ and by Lemma \ref{l2} that
\begin{eqnarray}\label{21}
\!\!\!\!\!\!\!\!&&\|X^\epsilon(t)-Y^\epsilon(t)\|_{F_{1,2}^*}^2+2\int_0^t\langle\Psi(s,X^\epsilon(s),\mathcal{L}_{X^\epsilon(s)})-\Psi(s,Y^\epsilon(s),\mathcal{L}_{Y^\epsilon(s)}),X^\epsilon(s)-Y^\epsilon(s)\rangle_2ds
\nonumber\\
=\!\!\!\!\!\!\!\!&&\|X(0)-Y(0)\|_{F_{1,2}^*}^2+\int_0^t\|B(s,X^\epsilon(s),\mathcal{L}_{X^\epsilon(s)})-B(s,Y^\epsilon(s),\mathcal{L}_{Y^\epsilon(s)})\|_{L_2(U,H)}^2ds
\nonumber\\
\!\!\!\!\!\!\!\!&&+2\int_0^t\big\langle\big( B(s,X^\epsilon(s),\mathcal{L}_{X^\epsilon(s)})-B(s,Y^\epsilon(s),\mathcal{L}_{Y^\epsilon(s)})\big)dW(s),X^\epsilon(s)-Y^\epsilon(s)\big\rangle_{F_{1,2}^*}
\nonumber\\
\!\!\!\!\!\!\!\!&&
+2(1-\epsilon)\int_0^t\langle\Psi(s,X^\epsilon(s),\mathcal{L}_{X^\epsilon(s)})-\Psi(s,Y^\epsilon(s),\mathcal{L}_{Y^\epsilon(s)}),X^\epsilon(s)-Y^\epsilon(s)\rangle_{F_{1,2}^*}ds.
\end{eqnarray}
Following from ({\textbf{A}}{\textbf{2}}), ({\textbf{A}}{\textbf{3}}) and (\ref{es1}), and taking expectation on both sides of (\ref{21}) that
\begin{eqnarray*}
\!\!\!\!\!\!\!\!&&\mathbb{E}\|X^\epsilon(t)-Y^\epsilon(t)\|_{F_{1,2}^*}^2
\leq \mathbb{E}\|X(0)-Y(0)\|_{F_{1,2}^*}^2+C\int_0^t\mathbb{E}\|X^\epsilon(s)-Y^\epsilon(s)\|_{F_{1,2}^*}^2ds.
\end{eqnarray*}
Consequently, by Gronwall's lemma, if $X(0)=Y(0)$, then we have $X^\epsilon=Y^\epsilon$, $\mathbb{P}$-a.s., which implies the uniqueness. The proof of Theorem \ref{t2} is complete.\hspace{\fill}$\Box$

\subsection{Proof of Theorem \ref{t1}}
The main idea is to verify the sequence $\{X^\epsilon\}_{\epsilon\in(0,1)}$ defined in Eq.~(\ref{e2}) converges to the solution of (\ref{e1}) when $\epsilon\to 0$.

\textbf{(Existence)} Firstly, applying It\^{o}'s formula and by Lemma \ref{l2} we have
\begin{eqnarray}\label{22}
\!\!\!\!\!\!\!\!&&\|X^\epsilon(t)\|_{F_{1,2}^*}^2+2\int_0^t\langle\Psi(s,X^\epsilon(s),\mu^\epsilon(s)),X^\epsilon(s)\rangle_2ds
\nonumber\\
=\!\!\!\!\!\!\!\!&&\|X(0)\|_{F_{1,2}^*}^2+2(1-\epsilon)\int_0^t\langle\Psi(s,X^\epsilon(s),\mu^\epsilon(s)),X^\epsilon(s)\rangle_{F_{1,2}^*}ds+\int_0^t\|B(s,X^\epsilon(s),\mu^\epsilon(s))\|_{L_2(U,H)}^2ds
\nonumber\\
\!\!\!\!\!\!\!\!&&+2\int_0^t\big\langle B(s,X^\epsilon(s),\mu^\epsilon(s))dW(s),X^\epsilon(s)\big\rangle_{F_{1,2}^*}.
\end{eqnarray}
Taking expectation to both sides of (\ref{22}), then it follows from ({\textbf{A}}{\textbf{3}}) that
\begin{eqnarray*}
\!\!\!\!\!\!\!\!&&\mathbb{E}\|X^\epsilon(t)\|_{F_{1,2}^*}^2+\alpha_1\mathbb{E}\int_0^t|\Psi(s,X^\epsilon(s),\mu^\epsilon(s))|_2^2ds
\nonumber\\
\leq\!\!\!\!\!\!\!\!&&\mathbb{E}\|X(0)\|_{F_{1,2}^*}^2+2(1-\epsilon)\mathbb{E}\int_0^t\|\Psi(s,X^\epsilon(s),\mu^\epsilon(s))\|_{F_{1,2}^*}\|X^\epsilon(s)\|_{F_{1,2}^*}ds
\nonumber\\
\!\!\!\!\!\!\!\!&&+\mathbb{E}\int_0^t\|B(s,X^\epsilon(s),\mu^\epsilon(s))\|_{L_2(U,H)}^2ds+(\alpha_2+\alpha_3)\mathbb{E}\int_0^t\|X^\epsilon(s)\|_{F_{1,2}^*}^2ds
\nonumber\\
\leq\!\!\!\!\!\!\!\!&&\mathbb{E}\|X(0)\|_{F_{1,2}^*}^2+C_0(1-\epsilon)\mathbb{E}\int_0^t|\Psi(s,X^\epsilon(s),\mu^\epsilon(s))|_{2}\|X^\epsilon(s)\|_{F_{1,2}^*}ds
\nonumber\\
\!\!\!\!\!\!\!\!&&+(2K_1+\alpha_2+\alpha_3)\mathbb{E}\int_0^t\|X^\epsilon(s)\|_{F_{1,2}^*}^2ds
\nonumber\\
\leq\!\!\!\!\!\!\!\!&&\mathbb{E}\|X(0)\|_{F_{1,2}^*}^2+\varepsilon_0\mathbb{E}\int_0^t|\Psi(s,X^\epsilon(s),\mu^\epsilon(s))|_{2}^2ds+C\mathbb{E}\int_0^t\|X^\epsilon(s)\|_{F_{1,2}^*}^2ds,
\end{eqnarray*}
where the constants $C_0,C>0$ are independent of $\epsilon$ and $\varepsilon_0<\alpha_1$.

Then Gronwall's lemma implies that
\begin{eqnarray}\label{24}
\!\!\!\!\!\!\!\!&&\mathbb{E}\|X^\epsilon(t)\|_{F_{1,2}^*}^2+(\alpha_1-\varepsilon_0)\mathbb{E}\int_0^t|\Psi(s,X^\epsilon(s),\mu^\epsilon(s))|_2^2ds\leq e^{CT}\mathbb{E}\|X(0)\|_{F_{1,2}^*}^2,~t\in[0,T].~~~
\end{eqnarray}

Now we are in the position to show the convergence of solution. Applying It\^{o}'s formula for any $\epsilon,\tilde{\epsilon}\in(0,1)$ and $t\in[0,T]$,
\begin{eqnarray*}
\!\!\!\!\!\!\!\!&&\|X^\epsilon(t)-X^{\tilde{\epsilon}}(t)\|_{F_{1,2}^*}^2
+2\int_0^t\langle\Psi(s,X^\epsilon(s),\mu^\epsilon(s))-\Psi(s,X^{\tilde{\epsilon}}(s),\mu^{\tilde{\epsilon}}(s)), X^\epsilon(s)-X^{\tilde{\epsilon}}(s)\rangle_2ds
\nonumber\\
\leq\!\!\!\!\!\!\!\!&&2\int_0^t\langle\Psi(s,X^\epsilon(s),\mu^\epsilon(s))-\Psi(s,X^{\tilde{\epsilon}}(s),\mu^{\tilde{\epsilon}}(s)), X^\epsilon(s)-X^{\tilde{\epsilon}}(s)\rangle_{F_{1,2}^*}ds
\nonumber\\
\!\!\!\!\!\!\!\!&&+C_1\int_0^t\Big(\epsilon|\Psi(s,X^\epsilon(s),\mu^\epsilon(s))|_2+\tilde{\epsilon}|\Psi(s,X^{\tilde{\epsilon}}(s),\mu^{\tilde{\epsilon}}(s))|_2\Big) \|X^\epsilon(s)-X^{\tilde{\epsilon}}(s)\|_{F_{1,2}^*}ds
\nonumber\\
\!\!\!\!\!\!\!\!&&+K_1\int_0^t\|X^\epsilon(s)-X^{\tilde{\epsilon}}(s)\|_{F_{1,2}^*}^2+\mathbb{W}_{2,H}(\mu^\epsilon(s),\mu^{\tilde{\epsilon}}(s))^2ds
\nonumber\\
\!\!\!\!\!\!\!\!&&+2\int_0^t\langle \big(B (s,X^\epsilon(s),\mu^\epsilon(s))-B(s,X^{\tilde{\epsilon}}(s),\mu^{\tilde{\epsilon}}(s))\big)dW(s),X^\epsilon(s)-X^{\tilde{\epsilon}}(s)\rangle_{F_{1,2}^*},
\end{eqnarray*}
where we denote $\mu^{\tilde{\epsilon}}(s):=\mathcal{L}_{X^{\tilde{\epsilon}}(s)}$ and the constant $C_1>0$ independent of $\epsilon$.

Following from ({\textbf{A}}{\textbf{3}}), by Young's inequality, it leads to
\begin{eqnarray}\label{23}
\!\!\!\!\!\!\!\!&&\|X^\epsilon(t)-X^{\tilde{\epsilon}}(t)\|_{F_{1,2}^*}^2
+\alpha_1\int_0^t|\Psi(s,X^\epsilon(s),\mu^\epsilon(s))-\Psi(s,X^{\tilde{\epsilon}}(s),\mu^{\tilde{\epsilon}}(s))|_2^2ds
\nonumber\\
\leq\!\!\!\!\!\!\!\!&&\frac{\alpha_1}{2}\int_0^t|\Psi(s,X^\epsilon(s),\mu^\epsilon(s))-\Psi(s,X^{\tilde{\epsilon}}(s),\mu^{\tilde{\epsilon}}(s))|_2^2ds
\nonumber\\
\!\!\!\!\!\!\!\!&&+C_2(\epsilon+\tilde{\epsilon})\int_0^t|\Psi(s,X^\epsilon(s),\mu^\epsilon(s))|_2^2+|\Psi(s,X^{\tilde{\epsilon}}(s),\mu^{\tilde{\epsilon}}(s))|_2^2ds
\nonumber\\
\!\!\!\!\!\!\!\!&&+K_1\int_0^t\mathbb{W}_{2,H}(\mu^\epsilon(s),\mu^{\tilde{\epsilon}}(s))^2ds+C_3\int_0^t\|X^\epsilon(s)-X^{\tilde{\epsilon}}(s)\|_{F_{1,2}^*}^2ds
\nonumber\\
\!\!\!\!\!\!\!\!&&+2\int_0^t\langle \big(B (s,X^\epsilon(s),\mu^\epsilon(s))-B(s,X^{\tilde{\epsilon}}(s),\mu^{\tilde{\epsilon}}(s))\big)dW(s),X^\epsilon(s)-X^{\tilde{\epsilon}}(s)\rangle_{F_{1,2}^*},
\end{eqnarray}
where the constants $C_2,C_3>0$ are independent of $\epsilon,\tilde{\epsilon}$.

Taking expectation and rearranging (\ref{23}), the Burkholder-Davis-Gundy's inequality gives that
\begin{eqnarray*}
\!\!\!\!\!\!\!\!&&\mathbb{E}\left[\sup_{s\in[0,t]}\|X^\epsilon(s)-X^{\tilde{\epsilon}}(s)\|_{F_{1,2}^*}^2\right]
+\frac{\alpha_1}{2}\mathbb{E}\int_0^t|\Psi(s,X^\epsilon(s),\mu^\epsilon(s))-\Psi(s,X^{\tilde{\epsilon}}(s),\mu^{\tilde{\epsilon}}(s))|_2^2ds
\nonumber\\
\leq\!\!\!\!\!\!\!\!&&C_4\mathbb{E}\int_0^t\|X^\epsilon(s)-X^{\tilde{\epsilon}}(s)\|_{F_{1,2}^*}^2ds+C_2(\epsilon+\tilde{\epsilon})\mathbb{E}\int_0^t|\Psi(s,X^\epsilon(s),\mu^\epsilon(s))|_2^2+|\Psi(s,X^{\tilde{\epsilon}}(s),\mu^{\tilde{\epsilon}}(s))|_2^2ds
\nonumber\\
\!\!\!\!\!\!\!\!&&+8\mathbb{E}\Big(\int_0^t\|B (s,X^\epsilon(s),\mu^\epsilon(s))-B(s,X^{\tilde{\epsilon}}(s),\mu^{\tilde{\epsilon}}(s)\|_{L_2(U,H)}^2\|X^\epsilon(s)-X^{\tilde{\epsilon}}(s)\|_{F_{1,2}^*}^2ds\Big)^{\frac{1}{2}}
\nonumber\\
\leq\!\!\!\!\!\!\!\!&&C_5\mathbb{E}\int_0^t\|X^\epsilon(s)-X^{\tilde{\epsilon}}(s)\|_{F_{1,2}^*}^2ds+C_2(\epsilon+\tilde{\epsilon})\mathbb{E}\int_0^t|\Psi(s,X^\epsilon(s),\mu^\epsilon(s))|_2^2+|\Psi(s,X^{\tilde{\epsilon}}(s),\mu^{\tilde{\epsilon}}(s))|_2^2ds
\nonumber\\
\!\!\!\!\!\!\!\!&&+\frac{1}{2}\mathbb{E}\left[\sup_{s\in[0,t]}\|X^\epsilon(s)-X^{\tilde{\epsilon}}(s)\|_{F_{1,2}^*}^2\right],
\end{eqnarray*}
where the constants $C_4,C_5>0$ are independent of $\epsilon,\tilde{\epsilon}$.

Take $(\ref{6})$ into account we have
\begin{eqnarray}\label{28}
\!\!\!\!\!\!\!\!&&\mathbb{E}\left[\sup_{s\in[0,t]}\|X^\epsilon(s)-X^{\tilde{\epsilon}}(s)\|_{F_{1,2}^*}^2\right]
+\alpha_1\mathbb{E}\int_0^t|\Psi(s,X^\epsilon(s),\mu^\epsilon(s))-\Psi(s,X^{\tilde{\epsilon}}(s),\mu^{\tilde{\epsilon}}(s))|_2^2ds
\nonumber\\
\leq\!\!\!\!\!\!\!\!&& C(\epsilon+\tilde{\epsilon}).
\end{eqnarray}
Consequently, there is a continuous $(\mathcal{F}_t)_{t\geq 0}$-adapted process $X\in L^2(\Omega;C([0,T];F_{1,2}^*))$ such that $X^\epsilon\to X$ strongly in $L^2(\Omega;C([0,T];F_{1,2}^*))$ as $\epsilon\to 0$. The embedding $F_{1,2}^*\subset(L^2(\mu_M))^*$ densely implies that $X^\epsilon\to X$ weakly in $L^2(\Omega\times[0,T];L^2(\mu_M))$ as $\epsilon\to 0$. The Burkholder-Davis-Gundy's inequality yields that $\int_0^{\cdot}B(s,X^\epsilon(s),\mathcal{L}_{X^\epsilon(s)})dW(s)$ converges strongly to
$$\int_0^{\cdot}B(s,X(s),\mathcal{L}_{X(s)})dW(s)$$ in $L^2(\Omega;C([0,T];F_{1,2}^*))$ as $\epsilon\to 0$. Moreover, $$\int_0^\cdot\Psi(s,X^\epsilon(s),\mathcal{L}_{X^\epsilon(s)})ds$$
converges strongly to an element in $L^2(\Omega;C([0,T];F_{1,2}))$, owing to (\ref{28}), it follows that $\{\Psi(\cdot,X^\epsilon(\cdot),\mathcal{L}_{X^\epsilon(\cdot)})\}_{\epsilon\in(0,1)}$  strongly converges to an element noted by $Z(\cdot)$ in $L^2(\Omega\times[0,T];L^2(\mu_M))$.

For any $t\in[0,T]$, recall the equation
\begin{equation*}
X^\epsilon(t)+(\epsilon-L)\int_0^t\Psi(s,X^\epsilon(s),\mathcal{L}_{X^\epsilon(s)})ds=X(0)+\int_0^tB(s,X^\epsilon(s),\mathcal{L}_{X^\epsilon(s)})dW(s).
\end{equation*}
Taking $\epsilon\to 0$, it follows that for any $t\in[0,T]$,
\begin{equation*}
X(t)-L\int_0^t Z(s)ds=X(0)+\int_0^tB(s,X(s),\mathcal{L}_{X(s)})dW(s)~\text{holds in}~ (L^2(\mu_M))^*.
\end{equation*}

Repeating the same arguments as in the proof of Theorem \ref{t2}, we conclude $Z(\cdot)=\Psi(\cdot,X(\cdot),\mathcal{L}_{X(\cdot)})$, $dt\times\mathbb{P}$-a.s., and then it follows that $\int_0^\cdot\Psi(s,X(s),\mu(s))ds\in C([0,T];F_{1,2})$, $\mathbb{P}$-a.s.

The proof of the strong uniqueness of solutions to (\ref{e1}) is similar to the proof of uniqueness in Theorem \ref{t2}, here we omit the details. Since the strong solution is also a weak solution, it suffices to give the proof of weak uniqueness of solutions. Indeed,  the weak uniqueness of solutions can not be obtained directly since the classical Yamada-Watanabe theorem  is not directly applicable
in the distribution dependence case.

\textbf{(Weak uniqueness)} Given two weak solutions $(X(t),W(t))$ and $(\tilde{X}(t),\tilde{W}(t))$ with respect to the stochastic basis $(\Omega,\{\mathcal{F}_t\}_{t\geq 0},\mathbb{P})$ and $(\tilde{\Omega},\tilde{\{\mathcal{F}_t\}}_{t\geq 0},\tilde{\mathbb{P}})$, respectively, such that $\mathcal{L}_{X(0)}|_{\mathbb{P}}=\mathcal{L}_{\tilde{X}(0)}|_{\tilde{\mathbb{P}}}\in\mathcal{P}_2(H)$. Here we use $\mathcal{L}_{X(t)}|_{\mathbb{P}}$ to stress the distribution of $X(t)$ under probability measure $\mathbb{P}$. Denote $A:=L\Psi$, $X(t)$ solves Eq.~(\ref{e1}) and $\tilde{X}(t)$ solves the following
\begin{equation}\label{26}
d\tilde{X}(t)=A(t,\tilde{X}(t),\mathcal{L}_{\tilde{X}(t)}|_{\tilde{\mathbb{P}}})dt+B(t,\tilde{X}(t),\mathcal{L}_{{\tilde{X}(t)}} |_{\tilde{\mathbb{P}}})d\tilde{W}(t).
\end{equation}
Our aim is to verify $\mathcal{L}_{X(t)}|_{\mathbb{P}}=\mathcal{L}_{\tilde{X}(t)}|_{\tilde{\mathbb{P}}}$, $t\geq 0$. Let us denote $\mu(t)=\mathcal{L}_{X(t)}|_{\mathbb{P}}$, and consider $\bar{A}(t,x):=A(t,x,\mu)$ and $\bar{B}(t,x):=B(t,x,\mu)$, $x\in H$.

According to the conditions ({\textbf{A}}{\textbf{1}})-({\textbf{A}}{\textbf{4}}), the following SPDE
\begin{equation}\label{25}
d\bar{X}(t)=\bar{A}(t,\bar{X}(t))dt+\bar{B}(t,\bar{X}(t))d\tilde{W}(t),~\bar{X}(0)=\tilde{X}(0),,
\end{equation}
has a unique solution under $(\tilde{\Omega},\tilde{\{\mathcal{F}_t\}}_{t\geq 0},\tilde{\mathbb{P}})$. By making use of Yamada-Watanabe theorem, the weak uniqueness to Eq.~(\ref{25}) also holds. We note that
$$dX(t)=\bar{A}(t,X(t))dt+\bar{B}(t,X(t))dW(t),~\mathcal{L}_{X(0)}|_{\mathbb{P}}=\mathcal{L}_{\tilde{X}(0)}|_{\tilde{\mathbb{P}}},$$
the weak uniqueness of Eq.~(\ref{25}) gives that
\begin{equation}\label{27}
\mathcal{L}_{X(t)}|_{\mathbb{P}}=\mathcal{L}_{\bar{X}(t)}|_{\tilde{\mathbb{P}}}.
\end{equation}
Then it is obvious that Eq. (\ref{25}) reduces to
$$d\bar{X}(t)=A(t,\bar{X}(t),\mathcal{L}_{\bar{X}(t)}|_{\tilde{\mathbb{P}}})dt+B(t,\bar{X}(t),\mathcal{L}_{\bar{X}(t)}|_{\tilde{\mathbb{P}}})d\tilde{W}(t),~\bar{X}(0)=\tilde{X}(0).$$
By ({\textbf{A}}{\textbf{1}})-({\textbf{A}}{\textbf{4}}), (\ref{26}) has a unique solution and it follows that $\bar{X}=\tilde{X}$. Consequently, we conclude that $\mathcal{L}_{X(t)}|_{\mathbb{P}}=\mathcal{L}_{\tilde{X}(t)}|_{\tilde{\mathbb{P}}}$ by plugging $\bar{X}=\tilde{X}$ into (\ref{27}). The proof is complete.
\hspace{\fill}$\Box$

\vspace{1mm}
\noindent\textbf{Acknowledgements} {The authors would like to thank the referee  for helpful suggestions and comments.}


\begin{thebibliography}{4}

\bibitem{A} Aronson, D. G. (1986).  \textit{The porous medium equation}, Lecture Notes Math. Vol. 1224,
 Springer, pp. 1--46.
\bibitem{BDR3} Barbu, V., Da Prato, G. and R\"{o}ckner, M. (2008). \textit{Existence and uniqueness of nonnegative solutions to the stochastic porous media equation},  Indiana Univ. Math. J. \textbf{57} 187--211.
\bibitem{BDR1}  Barbu, V., Da Prato, G. and R\"{o}ckner, M. (2009). \textit{Existence of strong solutions for stochastic porous media equation under general monotonicity conditions}, Ann. Probab. \textbf{37}  428--452.
\bibitem{BDR2}  Barbu, V., Da Prato, G. and R\"{o}ckner, M. (2016). \textit{Stochastic Porous Media Equations},  Lecture Notes in Math. 2163, Springer.
\bibitem{BR2} Barbu, V. and R\"{o}ckner, M. (2018). \textit{Probabilistic representation for solutions to non-linear Fokker-Planck equations}, SIAM J. Math. Anal. \textbf{50}  4246--4260.
\bibitem{BR1} Barbu, V. and R\"{o}ckner, M. (2020). \textit{From nonlinear Fokker-Planck equations to solutions of distribution dependent SDE},  Ann. Probab. \textbf{48}  1902--1920.
\bibitem{BR3} Barbu, V. and R\"{o}ckner, M. (2021). \textit{Solutions for nonlinear Fokker-Planck equations with measures as initial data and McKean-Vlasov equations}, J. Funct. Anal.  \textbf{280}  108926.
\bibitem{BRR} Barbu, V., R\"{o}ckner, M. and Russo, F. (2015). \textit{Stochastic porous media equations in $\mathbb{R}^d$},  J. Math. Pures Appl.~(9) \textbf{237} 1024--1052.
\bibitem{BGLR}  Beyn, W.-J., Gess, B., Lescot, P. and  R\"{o}ckner, M. (2011). \textit{The global random attractor for a class of stochastic porous media equations}, Comm. Partial Differential Equations \textbf{36}  446--469.
\bibitem{BLPR} Buckdahn, R., Li, J., Peng, S. and  Rainer, C. (2017). \textit{Mean-field stochastic differential equations and associated PDEs}, Ann. Probab. \textbf{45}  824--878.


\bibitem{DRRW} Da Prato, G., R\"{o}ckner, M., Rozovskii, B. L. and  Wang, F.-Y. (2006).  \textit{Strong Solutions of Stochastic Generalized Porous Media Equations: Existence, Uniqueness, and Ergodicity}, Comm. Partial Differential Equations \textbf{31}  277--291.
\bibitem{EO}  Elliot, C.M. and Ockendon, J.R. (1982).  \textit{Weak and Variational Methods for Moving Boundary
Problems},  Research Notes in Mathematics 59, Pitman.
\bibitem{FJS} Farkas, W., Jacob, N. and Schilling, R.L. (2001).  \textit{Function spaces related to continuous negative definite functions: $\Psi$-Bessel potential spaces}, Dissertationes Math. \textbf{393} 62 pp.
\bibitem{G}  Gess, B. (2014). \textit{Random attractors for stochastic porous media equations perturbed by space-time linear multiplicative noise}, Ann. Probab. \textbf{42} 818--864.
\bibitem{GLS}  Gess, B., Liu, W. and Schenke, A. (2020). \textit{Random attractors for locally monotone stochastic partial differential equations},  J. Differential Equations \textbf{269} 3414--3455.
\bibitem{H} Heinemann, R. (2021). \textit{Distribution-dependent stochastic differential delay equations in finite and infinite dimenstions}, Infin. Dimens. Anal. Quantum Probab. Relat. Top. \textbf{24}, no. 3, Paper No.
2050024, 32 pp.

\bibitem{HS} Huang, X. and Song, Y. (2021). \textit{Well-posedness and regularity for distribution dependent SPDEs with singular drifts}, Nonlinear Anal. \textbf{203} 112167.

\bibitem{HRW} Huang, X., Ren, P. and Wang, F.-Y. (2021). \textit{Distribution Dependent Stochastic Differential Equations}, Front. Math. China \textbf{16} 257--301.


\bibitem{HW} Huang, X. and Wang, F.-Y. (2019). \textit{Distribution dependent SDEs with singular coefficients}, Stochastic Process. Appl. \textbf{129}  4747--4770.

\bibitem{JS} Jacob, N. and Schilling, R. (2006). \textit{Towards an $L^p$ potential theory for sub-Markovian semigroups: kernels and capacities}, Acta Math. Sin. (Engl. Ser.) \textbf{22} 1227--1250.
\bibitem{K2} Kaneko, H. (1986). \textit{On (r,p)-capacities for Markov processes}, Osaka J. Math. \textbf{23}(2)  325--336.
\bibitem{KS} Kazumi, T. and Shigekawa, I. (1992). \textit{Measures of finite $(r, p)$-energy and potentials on a separable metric space},  S\'{e}minaire de Probabilit\'{e}s XXVI, Lect. Notes Math. 1526, Springer, 415--444.
\bibitem{K1} Krylov, N.V. (2010). \textit{It\^{o}'s formula for the $L_p$-norm of stochastic $W^1_p$-valued processes}, Probab. Theory Related Fields \textbf{147} 583--605.
\bibitem{KR} Krylov, N. V. and  Rozovskii, B.L. (1979). \textit{Stochastic evolution equations},
Translated from Itogi Naukii Tekhniki, Seriya Sovremennye Problemy
Matematiki. \textbf{14}  71--146.
\bibitem{KX} Kallianpur, G. and  Xiong, J. (1995). \textit{Stochastic differential equations in infinite-dimensional spaces},
Institute of Mathematical Statistics Lecture Notes-Monograph Series 26.  Institute of Mathematical Statistics,
Hayward, CA.

\bibitem{Lacker} Lacker, D. (2018). \textit{Mean field games and interacting particle systems}, Springer.

\bibitem{L2} Li, J. (2018). \textit{Mean-field forward and backward SDEs with jumps and associated nonlocal quasi-linear integral-PDEs}, Stochastic Process. Appl. \textbf{128}  3118--3180.
\bibitem{L4} Liu, W. (2009). \textit{Harnack inequality and applications for stochastic evolution equations with monotone drifts},  J. Evol. Equ. \textbf{9}  747--770.
\bibitem{L1} Liu, W. (2010). \textit{Large deviations for stochastic evolution equations with small multiplicative noise}, Appl. Math. Optim. \textbf{61}  27--56.

\bibitem{LR1} Liu, W. and R\"{o}ckner, M. (2015). \textit{Stochastic Partial Differential Equations: An Introduction}, Universitext, Springer.
\bibitem{LRS18} Liu, W., R\"{o}ckner, M. and da~Silva, J.~L. (2018).
\textit{ Quasi-linear (stochastic) partial differential equations with
  time-fractional derivatives},  SIAM J. Math. Anal. \textbf{50} 2588--2607.

\bibitem{LRS21} Liu, W., R\"{o}ckner, M. and da~Silva, J.~L. (2021).
\textit{ Strong dissipativity of generalized time-fractional derivatives and quasi-linear (stochastic) partial differential equations},  J. Funct. Anal. \textbf{281} 109135.

\bibitem{MR}  Ma, Z.-M. and R\"{o}ckner, M. (1992). \textit{Introduction to the Theory of (Non-Symmetric) Dirichlet Forms}, Springer.
\bibitem{M1} McKean, H.P. (1967). \emph{Propagation of chaos for a class of nonlinear parabolic equations}, Lecture Series in
Differential Equations, \textbf{7} 41--57.
\bibitem{P} Peletier, L.A. (1979). \textit{The porous medim equation}, Froc. Con. on
Bifurcation Theory, Applications of Nonlinear Analysis in the Physical Sciences, Bielefeld.
\bibitem{RRW} Ren, J., R\"{o}ckner, M. and Wang, F.-Y. (2007). \textit{Stochastic generalized porous media and fast diffusion equations}, J. Differential Equations \textbf{238} 118--152.
\bibitem{RW3} Ren, P. and Wang, F.-Y. (2019). \textit{Bismut formula for Lions derivative of distribution dependent SDEs and applications}, J. Differential Equations \textbf{267} 4745--4777.
\bibitem{RW2} Ren, P. and Wang, F.-Y. (2021). \textit{Donsker-Varadhan Large Deviations for Path-Distribution Dependent SPDEs},  J. Math. Anal. Appl. \textbf{499}   125000.


\bibitem{RWW} R\"{o}ckner, M., Wang, F.-Y. and  Wu, L. (2006). \textit{Large deviations for stochastic generalized porous media equations},  Stochastic Process. Appl. \textbf{116}  1677--1689.
\bibitem{RW1}  R\"{o}ckner, M. and Wang, F.-Y. (2008). \textit{Non-monotone stochastic generalized porous media equations}, J. Differential Equations \textbf{245} 3898--3935.
\bibitem{RWX}  R\"{o}ckner, M., Wu, W. and  Xie, Y. (2018). \textit{Stochastic porous media equation on general measure
spaces with increasing Lipschitz nonlinearities}, Stochastic Process. Appl. \textbf{128} 2131--2151.
\bibitem{RZ21}  R\"{o}ckner, M. and Zhang, X. (2021).
\textit{Well-posedness of distribution dependent SDEs with singular drifts},
Bernoulli \textbf{27} 1131--1158.

\bibitem{SSZZ} Shen H.,  Smith S.,  Zhu R. and  Zhu. X. (2022). \textit{Large $N$ Limit of the $O(N)$ Linear Sigma Model via Stochastic Quantization}, Ann. Probab. \textbf{50} 131--202.

\bibitem{V07} V\'{a}zquez, J.L. (2007).
\textit{The Porous Medium Equation},
Oxford Mathematical Monographs, Oxford University Press, Oxford.

\bibitem{W2} Wang, F.-Y. (2007). \textit{Harnack inequality and applications for stochastic generalized porous media equations}, Ann. Probab. \textbf{35} 1333--1350.
\bibitem{W1} Wang, F.-Y. (2018). \textit{Distribution dependent SDEs for Landau type equations}, Stochastic Process. Appl. \textbf{128}  595--621.
\bibitem{WZ} Wu, W. and Zhai, J. (2020). \textit{Large deviations for stochastic porous media equation on general measure spaces}, J. Differential Equations \textbf{269} 10002--10036.
\bibitem{XZ} Xiong, J. and Zhai, J. (2018). \textit{Large deviations for locally monotone stochastic partial differential equations driven by L\'{e}vy noise}, Bernoulli \textbf{24}  2842--2874.
\end{thebibliography}
\end{document}